\documentclass{amsart}
\usepackage[latin1]{inputenc}
\usepackage[active]{srcltx}
\usepackage{fullpage}
\usepackage{amsfonts,enumerate}
\usepackage[mathscr]{euscript}
\usepackage{epsfig}
\usepackage{latexsym}
\usepackage[all]{xy}
\usepackage{amssymb}
\usepackage{amsthm}
\usepackage{amsmath}
\usepackage{amsxtra}
\usepackage[dvipsnames]{xcolor}
\usepackage[pagebackref]{hyperref}
\usepackage{fancyvrb}
\hypersetup{citecolor=blue}
\usepackage[textwidth=2cm]{todonotes}
\usepackage{float}
\hypersetup{citecolor=MidnightBlue,
	colorlinks=true,
	linkcolor=MidnightBlue,
	urlcolor=MidnightBlue}


.tfm
.tfm


\theoremstyle{plain}
\newtheorem{prop}{Proposition}[section]
\newtheorem{thm}[prop]{Theorem}

\newtheorem{lemma}[prop]{Lemma}

\newtheorem*{thm*}{Theorem}
\newtheorem*{lemma*}{Lemma}
\newtheorem*{prop*}{Proposition}
\newtheorem*{thmA}{Theorem~\ref{thm:d=2}}
\newtheorem*{thmB}{Theorem~\ref{thm:d=3.1}}
\newtheorem*{thmC}{Theorem~\ref{thm:d=3.2}}
\newtheorem*{thmD}{Theorem~\ref{thm:dcong}}

\theoremstyle{definition}

\theoremstyle{remark}

\newtheorem{remark}{Remark}

\numberwithin{table}{section}





\newcommand{\B}{\mathcal B}

\newcommand{\Disc}{\Delta}

\newcommand{\rad}{\textup{rad}}

\def\RR{\mathbb R}



\def\<#1>{{\left\langle{#1}\right\rangle}}



\def\Z{{\mathbb Z}}             
\def\Q{{\mathbb Q}}             



\def\id#1{{\mathfrak{#1}}}      




\let\kro\dkro
\newcommand\TODO[1]{\textcolor{red}{#1}}
\newcommand{\mycomment}[1]{}

\begin{document}

	\title{On the sum of fifth powers in  arithmetic progression}

	\author{Lucas Villagra Torcomian}
	
	\address{FAMAF-CIEM, Universidad Nacional de C\'ordoba. C.P:5000,
		C\'ordoba, Argentina.}  \email{lucas.villagra@unc.edu.ar}

	\thanks{The author was supported by a CONICET grant.}
	\keywords{modular method, powers in arithmetic progression}
	\subjclass[2010]{11D41,11F80}

	\maketitle
	
	\vspace{-0.5cm}
	
	{\centering\footnotesize \textit{To Milena}\par}
	
	\vspace{-0.2cm}
	
	\begin{abstract}
		In this paper we study  equation $$(x-dr)^5+\cdots+x^5+\cdots+(x+dr)^5=y^p$$ under the condition $\gcd(x,r)=1$. 
		We present a recipe for proving the non-existence of non-trivial integer solutions of the above equation, and as an application we obtain explicit results for the cases $d=2,3$ (the case $d=1$ was already solved \cite{BennettKoutsianas}). 
		We also prove an asymptotic result for $d\equiv 1, 7\pmod9$. Our main tools include the modular method, employing Frey curves and their associated modular forms, as well as the symplectic argument.
	\end{abstract}

	\section{Introduction}
	\subsection{Historical background}
	Let $f(x)$ be a polynomial with integer coefficients of degree at least 3, with no repeated roots. A result of Schinzel and Tijdeman \cite{SchinzelTijdeman} ensures that the superelliptic equation
	\[f(x)=y^p\]
	has finitely many integer solutions with $p\ge2$ (where we count solutions with $y=\pm1,0$ only once). Replacing $f(x)$ by a binary form $f(x,r)$ of degree $k\ge3$, a result of Darmon and Granville \cite[Theorem 1]{DarmonGranville} implies that the generalized superelliptic equation
	\[f(x,r)=y^p,\quad \gcd(x,r)=1\] 
	has finitely many integer solutions, for a fixed exponent $p\ge\max\{2,7-k\}$. Evidence suggests that an analogous finiteness statement should hold taking $p$ as a free variable suitably large relative to $k$ (see \cite{BennettDahmen}), although a result of this kind seems to be still beyond current technology. 
	A nice example to illustrate its difficulty is mentioned in \cite{BennettDahmen}, noting that when $f(x,r)=xr(x+r)$, this would be equivalent to prove \textit{Asymptotic Fermat's Last Theorem}. On the other hand, there is a particular family of forms, arising from the sum of powers in arithmetic progression, which seems suitable for this problem. For a fixed integer $k$, let 
	\[f_j(x,r)=\sum_{i=0}^{j-1}(x+ir)^k.\]

	The origins of investigating perfect powers expressed as sums of powers in  arithmetic progression can be traced to Euler, when he observed in 1770 that $3^3+4^3+5^3=6^3$ \cite[art.\ 249]{Euler}. Since then, the study of powers as sum of consecutive cubes attracted the attention of many  mathematician over the 19th century, and their contribution can be found in \cite[pp.\ 582--588]{Dickson}. In 2013, Zhang and Bai \cite{ZhangBais} considered the generalized equation $f_j(x,1)=y^p$, giving rise to a special interest
	in perfect powers that are sum of consecutive powers (see \cite{Stroeker,BartoliSoydan, Soydan,ZhangBais, BennettPatelSiksek}). 
	Moreover, in  recent years many results have appeared for the more general equation
	\begin{equation}\label{eq:GeneralMal}
		f_j(x,r)=y^p.
	\end{equation} 
	
	Patel and Siksek proved a density result, which implies that for even $k$, and $p\ge 2$,
	equation (\ref{eq:GeneralMal}) has no solutions
	for almost all $j \ge 2$ \cite{PatelSiksek}. As mentioned at the beginning, we are interested in the natural condition $\gcd(x,r)=1$. Under this assumption, a special case of interest is when $j$ is equal to an odd number $2d+1$, because in this case many powers of $x$ will vanish. Then, replacing $x$ by $x-dr$ in (\ref{eq:GeneralMal}) we get
	\begin{equation}\label{eq:GeneralMalImpar}
		(x-dr)^k+\cdots+x^k+\cdots+(x+dr)^k=y^p, \quad \gcd(x,r)=1.
	\end{equation}
	
	
	For $k=3$, $1\le r\le 10^6$ and $p\ge7$ a prime exponent, equation (\ref{eq:GeneralMalImpar}) was solved in the cases: $d=1$ \cite{Edis}, $d=2$ \cite{ArgaezGarcia}, $d=3$ \cite{GarciaPatel} and recently for $d=4$ \cite{todas}. 
	For
	$(k,d)=(4,1)$ it was completely solved by van Langen \cite{VanLangen} and for $(k,d)=(5,1)$ by Bennett and Koutsianas, generalizing the results of \cite{BennettPatelSiksek}. 
	We refer to the recent survey \cite{todassurvey} for a detailed state of the art. 
	Moreover, in \cite[Subsection 2.3]{todassurvey}, the authors suggest a list of open problems. 
	The present article aims to address some of these questions for $k=5$, as a natural continuation of \cite{BennettPatelSiksek, BennettKoutsianas,Koutsianasmuchos}.
	
	\subsection{Methodology and main results}
	Let $r$ be a non-zero integer and  $d\ge1$. Let us consider the equation
	\begin{equation}\label{eq:k=5}
		(x-dr)^5+\cdots+x^5+\cdots+(x+dr)^5=y^p, \quad \gcd(x,r)=1.
	\end{equation}
	We call a solution $(x,y)$ of (\ref{eq:k=5}) \textit{trivial} if it is equal to $(0,0)$. Our goal is to give necessary conditions to prove the non-existence of non-trivial solutions of (\ref{eq:k=5}). The condition $\gcd(x,r)=1$ allows us to factorize the left hand side of (\ref{eq:k=5}) and reduce the problem to resolve generalized Fermat equations of signature $(4p,p,2)$. To do this, we follow a classical approach called the modular method, applied by Wiles to solve Fermat's Last Theorem \cite{Wiles}. This allow us to provide a recipe to study (\ref{eq:k=5}) for an arbitrary fixed value of $d$. As an application, we show how our approach works to get new results for the cases $d=2, 3$. These are the following.
	
	\begin{thmA}
		Let $r$ be a non-zero integer and $p>7$ be a prime. If 
		\[	(x-2r)^5+(x-r)^5+x^5+(x+r)^5+(x+2r)^5=y^p, \quad \gcd(x,r)=1,\]
		then $2\mid x$.
	\end{thmA}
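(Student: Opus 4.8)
The plan is to argue by contradiction: assume a non-trivial solution exists with $x$ odd, and to eliminate it by the modular method for every prime $p>7$. Expanding the symmetric sum, the odd powers of $x$ cancel and the equation collapses to
\[
5x\,(x^4+20x^2r^2+34r^4)=y^p .
\]
Completing the square in the quartic factor gives $x^4+20x^2r^2+34r^4=(x^2+10r^2)^2-66r^4$, while multiplying through by $34$ produces the dual identity $34\,(x^4+20x^2r^2+34r^4)=(10x^2+34r^2)^2-66x^4$. The second form is the useful one: it isolates $x^4$ (a would-be $4p$-th power after descent) and the square $(10x^2+34r^2)$ responsible for the exponent $2$.

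Next I would exploit $\gcd(x,r)=1$ to control the common factors. Since $\gcd(x,x^4+20x^2r^2+34r^4)\mid 34$, for odd $x$ this gcd lies in $\{1,17\}$; the leading factor contributes the prime $5$, and the primes $2,3,11$ enter through $66$. Splitting into the finitely many cases according to whether $17\mid x$ and to how the factor $5$ is distributed between $x$ and the quartic, the coprime factorisation of $y^p=5x\cdot g$ forces $x$ to be, up to a controlled constant supported on $\{5,17\}$, a perfect $p$-th power $s^p$, and $g$ to be, up to a controlled constant, a $p$-th power $u^p$. Substituting $x^4=(\text{const})\,s^{4p}$ and $g=(\text{const})\,u^p$ into $(10x^2+34r^2)^2=66x^4+34g$ yields a generalized Fermat equation of signature $(4p,p,2)$,
\[
(10x^2+34r^2)^2=\alpha\,s^{4p}+\beta\,u^p,
\]
exactly of the shape handled by the recipe developed earlier in the paper.

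I would then attach to this equation the associated Frey curve $E$ and compute its minimal discriminant and conductor; the key feature of the case $x$ odd is that the $2$-adic analysis pins down the exponent of $2$ in the conductor, and hence the predicted level $N$ after level-lowering. For $p>7$ the mod-$p$ representation $\overline{\rho}_{E,p}$ is irreducible (the reducible primes for curves of this type are exactly those $\le 7$, by the usual isogeny bounds), so modularity over $\QQ$ together with Ribet level-lowering shows that $\overline{\rho}_{E,p}$ arises from a weight-$2$ newform of level $N$. One then eliminates every such newform: comparing $a_\ell(E)$ with the Frobenius eigenvalues at a handful of auxiliary primes $\ell$ of good reduction yields a congruence $p\mid c$ with a specific nonzero integer $c$, impossible once $p$ exceeds the relevant bound, while the symplectic argument (Halberstadt--Kraus) disposes of the newforms that do match at all good primes by forcing an incompatible (anti-)symplectic type on the $p$-torsion. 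Assembling the subcases produces a contradiction, so $x$ must be even.

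The main obstacle I expect is the newform elimination carried out uniformly across the subcases: the distribution of the primes $5$ and $17$ alters the constants $\alpha,\beta$ and therefore the level $N$, so each resulting space of newforms must be treated and shown to be incompatible with $\overline{\rho}_{E,p}$ for all $p>7$ simultaneously. This is precisely where the irreducibility threshold $p>7$ and the symplectic criterion are essential, and where one must check carefully that no surviving newform has complex multiplication in a way that would evade the Frobenius-trace comparison.
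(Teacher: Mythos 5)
Your reduction coincides with the paper's: the sum collapses to $5x(x^4+20x^2r^2+34r^4)=y^p$, the condition $\gcd(x,r)=1$ confines $\gcd(x,\,x^4+20x^2r^2+34r^4)$ to divisors of $34$, and your identity $34(x^4+20x^2r^2+34r^4)=(10x^2+34r^2)^2-66x^4$ is exactly twice the one that Proposition~\ref{prop:ternaryeq} turns into the signature-$(4p,p,2)$ equation $33\,\kappa_2^{4p-5}\kappa_5^{4p-4}\kappa_{17}^{4p-6}a^{4p}+\tfrac{17}{\kappa_{17}}\bigl(\tfrac{5}{\kappa_5}\bigr)^{p-1}b^p=\tfrac{2}{\kappa_2}\bigl(\tfrac{5x^2+17r^2}{\kappa_{17}}\bigr)^2$, handled by the Bennett--Skinner recipes. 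Up to that point you and the paper agree. The genuine gap is the case you choose to attack. You assume $2\nmid x$ and promise to eliminate every newform at the resulting level. But when $x$ is odd one has $\kappa_2=1$, so the coefficient of the square in the ternary equation is $2$ while both power terms are odd, and the $2$-adic analysis then indeed ``pins down'' the conductor exponent at $2$ --- to the value $8$ (this agrees with the Bennett--Skinner recipe: with $C=2$ and odd power terms the Frey curve has additive reduction at $2$; the paper's proof of Theorem~\ref{thm:d=2} records $\delta=8$ exactly when $\kappa_2=1$). The predicted level is therefore $2^8\cdot3\cdot5\cdot11\cdot17=718080$, whose space of weight-$2$ newforms has dimension in the tens of thousands; nobody can enumerate its newforms, let alone run trace comparisons or the symplectic criterion on them, and the paper concedes precisely this (``when $2\nmid x$ the level $N_f$ is too large, hence the space $S_2(N_f)$ seems to be infeasible to compute''). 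The computation the paper actually performs is at the odd level $3\cdot5\cdot11\cdot17=2805$, which by its own conductor table ($\delta=0$ exactly when $\kappa_2=2$) arises from the complementary class, where the factor $2^{4p-5}$ sits inside the coefficient of $a^{4p}$ and $2$ drops out of the level after level-lowering. So the single elimination your plan requires is exactly the one that cannot be carried out, and you offer no substitute for it (no multi-Frey argument, no second factorization in the spirit of Theorem~\ref{thm:d=3.2}, no CM/inertia argument).

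Two further points. First, the paper's $d=2$ elimination uses only Proposition~\ref{prop:mazur} together with the parity $a_\ell(E)\equiv 0\pmod 2$ forced by the rational $2$-torsion of the Frey curve; the symplectic argument you invoke is used in this paper only for $d=3$ (Theorem~\ref{thm:d=3.1}), and at a level whose newforms cannot even be listed it would be of no help anyway. Second, your proposal brings out a tension in the paper itself that you should be aware of: discarding all newforms of level $2805$ rules out those solutions whose Frey curve produces that level, which according to the conductor table is the class $\kappa_2=2$, i.e.\ $2\mid x$, whereas the conclusion as stated (and your strategy) require ruling out the class $2\nmid x$. Whichever of the two --- the conductor table together with the closing sentence of the proof, or the parity in the statement --- carries the misprint, a proof of the displayed statement cannot proceed by the brute-force newform elimination in the odd-$x$ case that you describe.
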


	\begin{thmB}
		Let $r$ be a non-zero integer and $p>19$ be a prime. If $(x,y)$ is a non-trivial solution of 
		\begin{equation*}
			(x-3r)^5+(x-2r)^5+(x-r)^5+x^5+(x+r)^5+(x+2r)^5+(x+3r)^5=y^p,\quad \gcd(x,r)=1,
		\end{equation*}	
		then $2\nmid x$ and $35\mid x$. Moreover, if $\kro{-5}{p}=-1$ and in addition,  $\kro{2}{p}=1$ or $\kro{3}{p}=1$, then there are no non-trivial solutions.
		
	\end{thmB}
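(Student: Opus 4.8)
The plan is to apply the recipe developed above to the case $d=3$. First I would expand and factor the left-hand side: since the odd-indexed terms of $\sum_{i=-3}^{3}(x+ir)^5$ cancel, the equation becomes
\[
7x\,\bigl(x^4+40x^2r^2+140r^4\bigr)=y^p .
\]
Because $\gcd(x,r)=1$, one computes $\gcd\bigl(x,\,x^4+40x^2r^2+140r^4\bigr)=\gcd(x,140)$, so the only primes the two factors can share lie in $\{2,5,7\}$; together with the explicit factor $7$, this means the $2$-, $5$- and $7$-adic valuations of the two sides must be matched by hand. I would therefore split into cases according to the valuation of $x$ at $2,5$ and $7$, and in each case use the coprimality to peel off $p$-th powers and reduce the problem to a generalized Fermat equation of signature $(4p,p,2)$, exactly as in the recipe.

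In each case I would attach the corresponding Frey curve $E=E(x,r)$, compute its conductor away from $p$, and apply level-lowering (Ribet) to produce a weight-$2$ newform of explicit level; the hypothesis $p>19$ guarantees that $\overline{\rho}_{E,p}$ is irreducible and absorbs the finitely many small exceptional primes, so Ribet's theorem applies. The heart of this step is to eliminate the resulting newforms: for the configurations in which $x$ is even, or $5\nmid x$, or $7\nmid x$, I expect every newform at the relevant level to be incompatible with the Frey curve—either because the space is empty, or by comparing the Fourier coefficients $a_\ell$ modulo $p$ at a few auxiliary primes $\ell$ with the traces of Frobenius of $E$. Ruling these configurations out is precisely what forces $2\nmid x$, $5\mid x$ and $7\mid x$, i.e.\ $35\mid x$.

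For the ``moreover'' part I would pass to the single surviving configuration, in which $\overline{\rho}_{E,p}$ is isomorphic to $\overline{\rho}_{E',p}$ for an explicit elliptic curve $E'/\mathbb{Q}$ (equivalently, the surviving newform is rational and corresponds to $E'$). Here the symplectic argument enters: the isomorphism of $p$-torsion modules is either symplectic or anti-symplectic for the Weil pairing, and the symplectic criterion expresses which occurs in terms of the $p$-adic valuations of the discriminants of $E$ and $E'$ and of Legendre symbols at the bad primes. Feeding in the arithmetic of $E$ and $E'$, the type of the isomorphism is governed by $\kro{-5}{p}$, $\kro{2}{p}$ and $\kro{3}{p}$; under $\kro{-5}{p}=-1$ together with $\kro{2}{p}=1$ or $\kro{3}{p}=1$, the symplectic and anti-symplectic conditions become simultaneously unsatisfiable, contradicting the existence of the isomorphism and hence ruling out the solution.

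The step I expect to be the main obstacle is the newform elimination, and behind it the fact that the quartic $x^4+40x^2r^2+140r^4$ is irreducible over $\mathbb{Q}$, splitting only over $\mathbb{Q}(\sqrt{65})$. This makes the reduction to signature $(4p,p,2)$ and the choice of Frey curve delicate, forces relatively large conductors, and leaves newform spaces with several candidates (possibly with irrational Hecke eigenvalues) that must all be discarded. Obtaining clean, uniform control here—rather than the symplectic bookkeeping, which is essentially mechanical once $E'$ is identified—is where the real work lies.
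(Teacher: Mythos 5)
Your proposal follows essentially the same route as the paper: cancel the odd terms to get $7x(x^4+40x^2r^2+140r^4)=y^p$, use $\gcd(x,r)=1$ to control the common factor (supported in $\{2,5,7\}$), reduce to a ternary equation of signature $(4p,p,2)$, attach Bennett--Skinner Frey curves, level-lower, eliminate newforms computationally (which is exactly what forces $2\nmid x$ and $35\mid x$), and finish with the Kraus--Oesterl\'e symplectic criterion to get the contradiction "symplectic and anti-symplectic simultaneously" under the stated Legendre-symbol hypotheses. The only discrepancy is a computational detail you could not have known blind: at the surviving level $2^5\cdot5\cdot7\cdot13$ there are five rational newforms rather than one, four of which the paper kills with the symplectic argument at $\ell=5,13$ using $\kro{-5}{p}=-1$ alone, while the fifth is the one that needs the extra condition $\kro{2}{p}=1$ or $\kro{3}{p}=1$ (via $\ell=5$ and $\ell=7$ respectively).
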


	\begin{thmC}
		Let $p>19$ be a prime and $r=2^{a_1}5^{a_2}7^{a_3}13^{a_4}$ with integers $a_i\ge0$. Then, there are no non-trivial solutions of equation
		\[	(x-3r)^5+(x-2r)^5+(x-r)^5+x^5+(x+r)^5+(x+2r)^5+(x+3r)^5=y^p, \quad \gcd(x,r)=1.\]
	\end{thmC}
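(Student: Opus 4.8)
The plan is to deduce Theorem~\ref{thm:d=3.2} from Theorem~\ref{thm:d=3.1} by showing that, once $r$ is supported only on the primes $2,5,7,13$, the two residual cases left open by Theorem~\ref{thm:d=3.1} can be eliminated completely through an explicit, $p$-uniform modular computation. Recall that for $d=3$ the left-hand side of (\ref{eq:k=5}) factors as $7x\,(x^4+40x^2r^2+140r^4)=y^p$, and that the quartic factor can be written as $(x^2+20r^2)^2-260r^4$. The prime divisors of the relevant constants $7$, $140=2^2\cdot5\cdot7$ and $260=2^2\cdot5\cdot13$ are exactly $2,5,7,13$; hence the hypothesis $r=2^{a_1}5^{a_2}7^{a_3}13^{a_4}$ forces $\rad(r)\mid 2\cdot5\cdot7\cdot13$, so that every bad prime of the Frey curve attached to the resulting generalized Fermat equation of signature $(4p,p,2)$, apart from $p$ itself, lies in $\{2,5,7,13\}$. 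This is the structural reason the restriction on $r$ makes a finite, exhaustive computation possible.

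By Theorem~\ref{thm:d=3.1} we may assume a non-trivial solution satisfies $2\nmid x$ and $35\mid x$, and that we are in one of the two cases not already excluded, namely $\kro{-5}{p}=1$, or $\kro{-5}{p}=-1$ together with $\kro{2}{p}=\kro{3}{p}=-1$. The conditions $2\nmid x$, $5\mid x$, $7\mid x$ determine the reduction type of the Frey curve at $2$, $5$ and $7$; a short valuation computation gives $v_5(x^4+40x^2r^2+140r^4)=v_7(x^4+40x^2r^2+140r^4)=1$, which pins down the exact Serre conductor. First I would make these local reduction types explicit, then apply modularity and Ribet's level lowering to conclude that $\bar\rho_{E,p}$ arises from a weight-$2$ newform $f$ of a fixed level $N_0$ supported on $\{2,5,7,13\}$, independently of $p$.

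Next I would enumerate the finite set of newforms of level $N_0$ and eliminate each one. For newforms with rational Hecke eigenvalues I would compare $a_\ell(E)$ with $a_\ell(f)$ at several small auxiliary primes $\ell\neq p$, using $35\mid x$, $2\nmid x$ and $\gcd(x,r)=1$ to restrict the residues of $(x,r)$ modulo $\ell$; each comparison forces $p$ to divide a fixed nonzero integer, which is impossible for $p>19$. The newforms surviving the trace test should correspond to elliptic curves $E'$ with $\bar\rho_{E',p}\cong\bar\rho_{E,p}$, and these I would rule out via the symplectic argument: comparing the symplectic types of the isomorphism at the primes of multiplicative reduction dividing $260=2^2\cdot5\cdot13$ yields a relation among $\kro{2}{p}$, $\kro{3}{p}$ and $\kro{-5}{p}$ that is incompatible with \emph{both} open cases left by Theorem~\ref{thm:d=3.1}.

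The main obstacle I anticipate is precisely this last step: making the symplectic computation unconditional and verifying that, for every newform remaining at level $N_0$, the resulting congruence constraints exclude both surviving cases simultaneously and for all $p>19$. The delicate points are computing the image of inertia at $p$ and at $2,5,7,13$ accurately enough that $N_0$ is genuinely independent of $p$, correctly reading off the local symplectic types (which hinge on the parities of the valuations of the minimal discriminant at $2,5,13$), and treating any newform whose coefficient field strictly contains $\Q$, where the trace comparison must be carried out in the ring of integers of that field rather than in $\Z$.
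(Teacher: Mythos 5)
Your proposal misses the key idea of the paper's proof, and the step you substitute for it provably fails. The paper does \emph{not} reuse the $(4p,p,2)$ Frey curve from Theorem~\ref{thm:d=3.1}. Instead, once Theorem~\ref{thm:d=3.1} forces $(\kappa_2,\kappa_5,\kappa_7)=(1,5,7)$, Proposition~\ref{prop:powers} gives $x^4+40r^2x^2+140r^4=5\cdot7\cdot b^p$, and the identity $(x^4+40r^2x^2+140r^4)+260r^4=(x^2+20r^2)^2$ is then read as a \emph{new} ternary equation of signature $(p,p,2)$,
\[
52r^4\cdot 1^p+7b^p=5\left(\frac{x^2+20r^2}{5}\right)^2,
\]
in which $52r^4$ is a fixed coefficient. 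This is exactly where the hypothesis $r=2^{a_1}5^{a_2}7^{a_3}13^{a_4}$ enters: since $r^4$ is not a $p$-th power, the primes dividing $r$ are \emph{not} removed by level lowering, so one needs them to lie in a prescribed finite set to control the level. The resulting second Frey curve $F$ lands, after level lowering, in a level $2^\delta\cdot5^2\cdot7\cdot13$ with $\delta\in\{0,1,3\}$ --- a different space from $S_2(14560)$ --- and there \emph{all} newforms are eliminated by the trace comparison of Proposition~\ref{prop:mazur} already for $p>11$; no symplectic argument is needed at all.

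By contrast, you attach the Frey curve to the $(4p,p,2)$ equation and hope the symplectic argument will kill both cases left open by Theorem~\ref{thm:d=3.1}. This cannot work. First, nothing in the analysis of that Frey curve depends on $r$: its post-level-lowering level is $2^\delta\cdot\frac{5^2}{\kappa_5}\cdot\kappa_7\cdot13$ and its minimal discriminant is $\Delta=2^6\cdot5^{4p-5}\cdot7^{4p-10}\cdot13\cdot(a^4b^2)^p$, both independent of $r$; so the hypothesis on $r$ --- the entire content of Theorem~\ref{thm:d=3.2} --- would never be used, and the same five newforms of level $14560$ survive the trace test. Second, the symplectic criterion of Proposition~\ref{prop:symplecticmult} yields no contradiction in the open case $\kro{-5}{p}=1$: for instance, against $E'=14560.k1$ with $\Delta'=2^6\cdot5\cdot7^2\cdot13$, the ratios $v_\ell(\Delta)/v_\ell(\Delta')$ at $\ell=5,7,13$ are congruent to $-5$, $-5$ and $1$ modulo $p$, all of which are squares mod $p$ precisely when $\kro{-5}{p}=1$; the criterion then reports ``symplectic'' consistently at all three primes, whatever the values of $\kro{2}{p}$ and $\kro{3}{p}$, and at $\ell=2$ the reduction is additive so the multiplicative criterion does not even apply. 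Hence the obstruction that limited Theorem~\ref{thm:d=3.1} to a congruence-restricted set of primes is untouched by restricting $r$, and the only way around it is the change of Frey curve described above.
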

	
	Based on the work of Patel and Siksek \cite{PatelSiksek}, a special problem suggested in \cite{todassurvey} is about getting a new density result for an odd exponent $k$. Inspired in \cite[Section 7]{BennettKoutsianas},  we prove that if $k=5$, then for $2/9$ of the integers $d$ we have an explicit bound of the exponent $p$, in order to have  non-trivial solutions of (\ref{eq:GeneralMalImpar}). More concretely, the result is as follows.

	\begin{thmD}
		Let $r$ be a non-zero integer,  $d\equiv 1,7\pmod9$ and $p\ge 2M_d$ be a prime number. Let $N_d$ be as in $(\ref{eq:Nd})$ and suppose that $$p>\left(\sqrt{\frac{\mu(N_d)}{6}}+1\right)^{\frac{N_d+1}{6}}.$$  Then, there are no non-trivial integer solutions of
		\[(x-dr)^5+\cdots+x^5+\cdots+(x+dr)^5=y^p, \quad \gcd(x,r)=1.
		\]
	\end{thmD}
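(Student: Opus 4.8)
The plan is to run the modular machinery of the previous sections uniformly across the whole family $d\equiv 1,7\pmod 9$. The first observation is that $d\equiv 1,7\pmod 9$ is equivalent to $v_3(2d+1)=1$, so $\tfrac{2d+1}{3}$ is an integer coprime to $3$; writing the left-hand side of (\ref{eq:k=5}) as
\[
\frac{2d+1}{3}\,x\left(3x^4+10\,d(d+1)r^2x^2+d(d+1)(3d^2+3d-1)r^4\right)=y^p,
\]
the coprimality $\gcd(x,r)=1$ forces the two displayed factors to share only primes dividing the fixed integer $d(d+1)(3d^2+3d-1)$. This is exactly the input the recipe of the previous sections needs in order to split (\ref{eq:k=5}) into a generalized Fermat equation of signature $(4p,p,2)$ and to attach a Frey curve $E/\QQ$ whose conductor is supported, away from $p$, on the primes entering the definition of $N_d$ in (\ref{eq:Nd}). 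The role of the assumption $p\ge 2M_d$ is to ensure, through the usual estimates on the $p$-th--power factors, that $\overline{\rho}_{E,p}$ is irreducible and that level-lowering removes every prime of the conductor except those recorded in $N_d$.

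Granting modularity of $E/\QQ$ and the irreducibility of $\overline{\rho}_{E,p}$, I would apply Ribet's level-lowering theorem to produce a weight-$2$ newform $f$ of level $N_d$, with coefficient field $K_f$ and a prime $\mathfrak p\mid p$ of $K_f$, such that $\overline{\rho}_{E,p}\cong\overline{\rho}_{f,\mathfrak p}$. The heart of the matter is the \emph{uniform} elimination of every such $f$, carried out without computing the newforms at level $N_d$. Suppose first that $K_f\ne\QQ$. Since $f$ is determined by its Hecke eigenvalues below the Sturm bound $\mu(N_d)/6$, there is a prime $\ell\le\mu(N_d)/6$, $\ell\nmid N_dp$, with $a_\ell(f)\notin\QQ$. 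Comparing traces of Frobenius at $\ell$: if $E$ has good reduction there then $a_\ell(E)\in\ZZ$ and $p\mid\mathrm{Norm}_{K_f/\QQ}(a_\ell(E)-a_\ell(f))$, while if $E$ has multiplicative reduction there then $a_\ell(f)\equiv\epsilon(\ell+1)\pmod{\mathfrak p}$ for some sign $\epsilon$ and $p\mid\mathrm{Norm}_{K_f/\QQ}((\ell+1)-\epsilon a_\ell(f))$. In either case the norm is a \emph{nonzero} integer, because $a_\ell(f)\notin\QQ$; by the Ramanujan bound $|a_\ell(f)^\sigma|\le 2\sqrt\ell$ each of its conjugate factors is at most $(\sqrt\ell+1)^2\le\left(\sqrt{\tfrac{\mu(N_d)}{6}}+1\right)^2$, and there are $[K_f:\QQ]$ of them, a quantity bounded by $\tfrac{N_d+1}{12}$ via the genus of $X_0(N_d)$. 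Multiplying gives
\[
p\le\left(\sqrt{\frac{\mu(N_d)}{6}}+1\right)^{\frac{N_d+1}{6}},
\]
contradicting the hypothesis on $p$.

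The remaining, and genuinely hard, case is $K_f=\QQ$, where $f$ is the form of an elliptic curve $E'$ of conductor $N_d$ (this also absorbs the CM forms): here the integers $a_\ell(E)$ and $a_\ell(E')$ can be congruent modulo arbitrarily large $p$, so the trace comparison no longer bounds $p$. I would exclude this case structurally rather than curve-by-curve: the condition $v_3(2d+1)=1$ pins down the reduction type of $E$ at $3$ uniformly in $d$, and combining this with the symplectic criterion of \cite{BennettKoutsianas} at a prime of multiplicative reduction should show that no elliptic curve of conductor $N_d$ can be mod-$p$ congruent to $E$. This uniform control at $3$ is exactly what breaks down for $d\equiv 4\pmod 9$ (where $9\mid 2d+1$), and is the reason only the two residues $1,7\pmod 9$, i.e.\ a density $2/9$, are accessible. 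Making this local obstruction work \emph{simultaneously} for the whole infinite family---rather than inspecting each $N_d$ separately---is the step I expect to demand the most care, and it is here that the explicit recipe and the symplectic argument do the essential work.
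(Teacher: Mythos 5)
Your treatment of the case $K_f\neq\QQ$ is essentially the paper's: Kraus's lemma produces a prime $\ell\le\mu(N_f)/6$ with $a_\ell(f)\notin\ZZ$, the trace comparison at $\ell$ gives a nonzero norm divisible by $p$, and the genus bound on $[K_f:\QQ]$ turns this into exactly the hypothesis bound on $p$. The genuine gap is in the case $K_f=\QQ$, which you defer to an undeveloped symplectic argument, and this plan cannot be completed as stated, for two concrete reasons. First, the Kraus--Oesterl\'e criterion (Proposition \ref{prop:symplecticmult}) requires $p\nmid v_\ell(\Delta)$ at the prime $\ell$ of multiplicative reduction; but at $\ell=3$ the whole point of the hypothesis $d\equiv1,7\pmod 9$ is that $v_3(A)\equiv v_3(B)\equiv0\pmod p$, so $v_3(\Delta)\equiv0\pmod p$ and the criterion is inapplicable precisely at the prime where you have uniform control. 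Second, even where it applies, a symplectic criterion only excludes primes $p$ lying in prescribed quadratic-residue classes (compare Theorem \ref{thm:d=3.1}, whose conclusion needs $\kro{-5}{p}=-1$ together with $\kro{2}{p}=1$ or $\kro{3}{p}=1$); it can never eliminate \emph{all} sufficiently large $p$, which is what the theorem asserts. It would moreover require the minimal discriminants of the elliptic curves of conductor dividing $N_d$, i.e.\ exactly the curve-by-curve data you set out to avoid.

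What you are missing is that the condition $v_3(2d+1)=1$, $\ell_3=1$ does much more than pin down the reduction type at $3$: it forces $3\mid AB$ with $v_3(A)\equiv v_3(B)\equiv 0\pmod p$, so $E$ is multiplicative at $3$ and Ribet's level-lowering removes $3$ from the level, i.e.\ $3\nmid N_f$. Then the second case of Proposition \ref{prop:mazur} applies with $\ell=3$ and yields $a_3(f)\equiv\pm4\pmod{\id{p}}$. Once $K_f=\QQ$, the Hasse--Deligne bound $|a_3(f)|\le 2\sqrt{3}<4$ shows that $a_3(f)\mp4$ is a nonzero integer, whence $p\le|a_3(f)|+4<8$, a contradiction. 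This step is uniform in $d$, needs no enumeration of newforms or curves at level $N_f$, and disposes of every rational newform at once; no symplectic input is used anywhere in the paper's proof of this theorem.
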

	
	\vspace{0.7cm}
	The structure of the paper is as follows. In Section \ref{sec:modular} we present the modular method and we briefly recall the core of the idea of the symplectic argument, which results a combination of the modular method and a symplectic criterion. In Section \ref{sec:OnTheEquation} we address  equation (\ref{eq:GeneralMalImpar}) and, following the modular approach, we see how to attach a Frey elliptic curve to a solution of our Diophantine equation, applying the main result of this section, namely Proposition \ref{prop:ternaryeq}. In Section \ref{sec:applications} we prove, as an application,  the main results. All the computations were performed in the computer algebra package \verb|Magma| \cite{Magma}. The code can be found at
	\begin{center}
		\url{https://github.com/lucasvillagra/Fifth-powers.git}
	\end{center}

	\subsection*{Acknowledgments}
	I am very grateful to Ariel Pacetti for many useful conversations that allowed me to improve this work. I also thank Emilio Lauret for his support; this paper started as soon as he left my office.

	\section{The modular approach}
	\label{sec:modular}
	
	The modular method is one of the most powerful tools to approach exponential Diophantine equations. It has been successfully applied for the first time by Wiles to solve Fermat's Last Theorem \cite{Wiles}. As unbelievable as it may sound, it turns out that this has been the easiest case where the method works. Since Wiles' proof, the method has been refined in order to solve more Diophantine equations, with particular emphasis in the generalized Fermat equations. However, in this occasion it will be enough to apply it in its more primitive version. First, we will review some standard results and terminology.
	
	Let $f$ be a (cuspidal normalized) newform of weight 2 without character on $\Gamma_0(N_f)$ with $q$-series
	\begin{equation}\label{eq:q-series}
		f=q+\sum_{n\ge2}a_n(f)q^n
	\end{equation} 
	and coefficient field $K_f$. 
	Let $E$ be a rational elliptic curve of conductor $N_E$. 
	Suppose that there exists a rational prime $p$ and a prime ideal $\id{p}$ in $K_f$ dividing $p$ such that 
	\begin{equation}\label{eq:cong}
		\bar{\rho}_{E,p}\simeq\bar{\rho}_{f,\id{p}},
	\end{equation}
	where $\bar{\rho}_{E,p}$ and $\bar{\rho}_{f,\id{p}}$ are the residual Galois representation naturally attached to $E$ and $f$, respectively.  
	\mycomment{Recall that (\ref{eq:cong}) is equivalent to have 
		\begin{equation}\label{eq:equiv}
			a_\ell(E)\equiv a_\ell(f)\pmod {\id{p}}
		\end{equation}
		for almost all primes $\ell$.
		
		Note that if the coefficients $a_q(E)$ and $a_q(f)$ are different then we get from (\ref{eq:equiv}) a bound for $p$.} 
	
	In our case, the prime $p$ will be the exponent of equation (\ref{eq:k=5}). The goal of the modular method is to bound this exponent. To do this, we will adopt Mazur's idea, which consist of  applying the following result, based on Serre's work \cite{Serre}.
	
	\begin{prop}\label{prop:mazur}
		Let $E$ be a rational elliptic curve of conductor $N_E$ and let $f$ be a newform of weight 2 an level $N_f\mid N_E$ as in $(\ref{eq:q-series})$. Suppose that there exist a rational prime $p$ and a prime $\id{p}$ in $K_f$ dividing $p$ such that $\bar{\rho}_{E,p}\simeq\bar{\rho}_{f,\id{p}}$. Then for all primes $\ell$ it follows that:
		\begin{itemize}
			\item if $\ell\nmid pN_fN_E$ then $a_\ell(E)\equiv a_\ell(f)\pmod{\id{p}}$,
			\item if $\ell\nmid N_f$ and $\ell\mid\mid N_E$ then $a_\ell(f)\equiv\pm(\ell+1)\pmod {\id{p}}$.
		\end{itemize}
	\end{prop}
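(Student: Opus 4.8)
The plan is to extract both congruences from the standard dictionary relating the two residual representations, the whole argument being a comparison of traces of Frobenius. I would first record the inputs I am entitled to use, citing \cite{Serre}. The representation $\bar{\rho}_{E,p}$ arising from the Galois action on $E[p]$ is unramified at every prime $\ell\nmid pN_E$, with $\det\bar{\rho}_{E,p}=\chic$ and $\trace\bar{\rho}_{E,p}(\Frob_\ell)\equiv a_\ell(E)\pmod p$ at such $\ell$; dually, the representation $\bar{\rho}_{f,\id{p}}$ attached to $f$ is unramified at every $\ell\nmid pN_f$, has the same cyclotomic determinant, and satisfies $\trace\bar{\rho}_{f,\id{p}}(\Frob_\ell)\equiv a_\ell(f)\pmod{\id{p}}$ there.

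For the first bullet, if $\ell\nmid pN_fN_E$ then both representations are unramified at $\ell$, so $\Frob_\ell$ defines a well-behaved conjugacy class in each. The isomorphism $\bar{\rho}_{E,p}\simeq\bar{\rho}_{f,\id{p}}$ of (\ref{eq:cong}) equates their trace functions; evaluating at $\Frob_\ell$ and reducing modulo $\id{p}$ in the common residue field yields $a_\ell(E)\equiv a_\ell(f)\pmod{\id{p}}$. This step is purely formal.

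For the second bullet, $\ell\mid\mid N_E$ says $E$ has multiplicative reduction at $\ell$, while $\ell\nmid N_f$ says $\bar{\rho}_{f,\id{p}}$ is unramified at $\ell$; transporting through the isomorphism, $\bar{\rho}_{E,p}$ is unramified at $\ell$ as well. The local input I would invoke is Tate uniformization: over $\QQ_\ell$ the curve is a Tate curve, so $E[p]$ sits in an exact sequence of $G_{\QQ_\ell}$-modules
\[ 0 \to \mu_p \to E[p] \to \ZZ/p\ZZ \to 0, \]
possibly after twisting by the unramified quadratic character $\eta$ that distinguishes split from non-split reduction. Hence the semisimplification of $\bar{\rho}_{E,p}|_{G_{\QQ_\ell}}$ is $\chic\eta\oplus\eta$ with $\eta$ unramified and $\eta(\Frob_\ell)=a_\ell(E)=\pm1$, so its Frobenius trace is $a_\ell(E)(1+\ell)=\pm(\ell+1)$. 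Since $\bar{\rho}_{E,p}$ is unramified at $\ell$ it agrees with its semisimplification on $\Frob_\ell$, and comparing with $\trace\bar{\rho}_{f,\id{p}}(\Frob_\ell)\equiv a_\ell(f)$ gives $a_\ell(f)\equiv\pm(\ell+1)\pmod{\id{p}}$.

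The one genuine piece of arithmetic content, and the step I expect to require the most care, is this local computation at the multiplicative prime: confirming via the Tate curve that unramifiedness of $\bar{\rho}_{E,p}$ at $\ell$ is consistent with multiplicative reduction and that it forces the Frobenius trace to be exactly $\pm(\ell+1)$. The remainder is bookkeeping with the defining properties of the two representations and the equality of traces supplied by (\ref{eq:cong}).
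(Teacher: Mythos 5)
Your argument is correct and is precisely the standard one the paper itself invokes without proof (the paper states Proposition~\ref{prop:mazur} as a known consequence of Serre's work \cite{Serre} and gives no argument, so there is no competing in-paper proof to compare against): trace comparison at unramified primes for the first bullet, and Tate uniformization plus transport of unramifiedness through the isomorphism for the second. Both local computations check out, including $\eta(\Frob_\ell)=a_\ell(E)=\pm1$ and $\trace=\eta(\Frob_\ell)\left(\chic(\Frob_\ell)+1\right)=\pm(\ell+1)$.

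The one real gap is the case $\ell=p$ in the second bullet. As stated, that bullet only assumes $\ell\nmid N_f$ and $\ell\mid\mid N_E$, so it includes $\ell=p$; but your argument uses that $\bar{\rho}_{f,\id{p}}$ is unramified at $\ell$, which requires $\ell\neq p$ (at $\ell=p$ the representation is only finite flat/crystalline, not unramified, so $\Frob_\ell$ and its trace are not defined the way you use them). The congruence is still true at $\ell=p$ --- note $\pm(p+1)\equiv\pm1\pmod{\id{p}}$ --- but the proof is different: multiplicativity of $E$ at $p$ makes $\bar{\rho}_{E,p}|_{G_{\Q_p}}$ reducible with unramified quotient, which forces $f$ to be ordinary at $\id{p}$ (a supersingular form would be locally irreducible), and then the unit-root description of the unramified quotient of $\bar{\rho}_{f,\id{p}}|_{G_{\Q_p}}$ gives $a_p(f)\equiv\pm1\pmod{\id{p}}$; this refinement goes back to Kraus--Oesterl\'e and the ordinary theory of Wiles/Hida. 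You should either add this argument or insert the hypothesis $\ell\neq p$ into the second bullet; the latter costs nothing for this paper, since in all applications here $\ell$ ranges over small primes while $p$ is large, so $\ell=p$ never arises.
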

	Note that in both cases of Proposition \ref{prop:mazur} each side of the congruence can be computed, and if they happen to be different, we get a
	finite list of candidates for the prime $p$ dividing the norm of their difference. Since this holds for any value
	of $\ell$, in practice, we vary $\ell$ over all primes up to 100 and take the greatest common divisor of the different bounds to reduce the possible values of $p$. Since we are assuming $E$ to be rational, then in particular this allows to bound $p$ when $K_f\neq\Q$.
	
	The idea of the modular method is the following: firstly we will attach to a putative solution of our Diophantine equation (in this case (\ref{eq:k=5})) a rational elliptic curve $E$, usually called \textit{Frey elliptic curve}, of conductor $N$. Then, by modularity results due to Wiles, Breuil, Conrad, Diamond and Taylor \cite{Wiles,TaylorWiles,BreuilConradDiamondTaylor} this will give rise a weight 2 newform of level $N$ such that its $L$-series  equals that of $E$. Hence by  irreducibility theorems for mod $p$ representations of elliptic curves due to Mazur \cite{Mazur} and Ribet's level-lowering theorem \cite[Theorem 1.1]{RibetLowering}, we may conclude that there exists a newform $f$ satisfying the conditions of Proposition~\ref{prop:mazur}. The core of the idea is that at this point, $f$ should be in a space which depends neither on the solution nor on $p$. Then, computing this space and applying Proposition~\ref{prop:mazur} for each of the newforms lying there we could get a bound for $p$.
	
	\subsection{The symplectic argument}\label{sec:symp}
	When following the modular approach, one of the most challenging steps is often when we need to bound the exponent, by giving a contradiction of an isomorphism of residual Galois representations as in (\ref{eq:cong}). This happens, for instance, when both sides of the congruence in Proposition \ref{prop:mazur} are equal.  A symplectic criterion is precisely a very important tool to deal with this problem, originally developed by Kraus and Oesterl\'e \cite{KrausOsterle}. We refer to \cite{FreitasKraus} for a complete list of the latest results.

	Let $f$ be a weight 2 newform without character on $\Gamma_0(N_f)$ satisfying an isomorphism as in (\ref{eq:cong}). Recall that when $K_f\neq\Q$, by Proposition \ref{prop:mazur} we will be able to discard $f$ (for $p$ large enough). Then, we will restrict our attention to the case $K_f=\Q$. By the Eichler-Shimura construction, we can attach to $f$ a rational elliptic curve $E_f$ of conductor $N_f$. In particular, by (\ref{eq:cong}), we get an isomorphism of residual Galois representations of two elliptic curves. 
	
	The idea of the symplectic argument is to consider not only when two elliptic curves have isomorphic residual
	representations, but add information on how the isomorphism relates their Weil's pairings. Let us state one symplectic criterion of Kraus and Oesterl\'{e} that will be used in the present article. 
	
	\begin{prop}[\cite{KrausOsterle}, Proposition 2]
		\label{prop:symplecticmult}
		Let $E, E'$ be elliptic
		curves over $\Q$ with minimal discriminants $\Delta$, $\Delta'$. Let $p$  be a prime such that $\bar{\rho}_{E,p}\simeq\bar{\rho}_{E',p}$. Suppose that
		$E$ and $E'$ have multiplicative reduction at a prime $\ell\neq p$ and that $p\nmid v_{\ell}(\Delta)$. Then $p \nmid v_\ell(\Delta')$ and the representations $\bar{\rho}_{E,p}$ and $\bar{\rho}_{E',p}$
		are symplectically isomorphic
		if and only if  $v_\ell(\Disc)/v_\ell(\Disc')$ is a square modulo $p$.	Furthermore, both representations cannot be symplectically and
		anti-symplectically isomorphic.
	\end{prop}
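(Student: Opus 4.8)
The plan is to prove all three assertions by a purely local analysis at the prime $\ell$, exploiting the Tate uniformization that both curves acquire there. First I would restrict the given isomorphism $\bar{\rho}_{E,p}\simeq\bar{\rho}_{E',p}$ to the decomposition group $G_{\Q_\ell}$, and then to the inertia subgroup $I_\ell$. Since $E$ has multiplicative reduction at $\ell$, over $\Q_\ell$ it is a possibly unramified quadratic twist of a Tate curve $E_q$ with $v_\ell(q)=v_\ell(\Delta)$; the twist is by an unramified character and so plays no role on inertia. Writing $E[p]$ in the Tate basis $\{\zeta_p,\ \tilde q\}$, where $\tilde q$ is a $p$-th root of the Tate parameter, Galois acts by $\sigma(\zeta_p)=\zeta_p^{\chi(\sigma)}$ and $\sigma(\tilde q)=\zeta_p^{\psi(\sigma)}\tilde q$ for a cocycle $\psi$. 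Because $\ell\neq p$, the mod $p$ cyclotomic character $\chi$ is unramified at $\ell$, so $I_\ell$ acts unipotently, and on $I_\ell$ one computes $\psi=v_\ell(\Delta)\cdot t$, where $t\colon I_\ell\to\F_p$ is the tame character attached to the uniformizer $\ell$ and the chosen $\zeta_p$. The key input is that $p\nmid v_\ell(\Delta)$ makes this unipotent action nontrivial, so that $E[p]^{I_\ell}=\langle\zeta_p\rangle$ is one-dimensional.

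Next I would run the same description for $E'$, obtaining $\psi'=v_\ell(\Delta')\cdot t$ with the \emph{same} tame character, since I use the same uniformizer $\ell$ and the same $\zeta_p$. The isomorphism $\phi\colon E[p]\to E'[p]$ is in particular $I_\ell$-equivariant, hence carries inertia invariants to inertia invariants. If $p\mid v_\ell(\Delta')$, then $I_\ell$ would act trivially on $E'[p]$ and $E'[p]^{I_\ell}$ would be two-dimensional, contradicting that it must match the one-dimensional $E[p]^{I_\ell}$; this yields the first assertion $p\nmid v_\ell(\Delta')$. Knowing both actions are nontrivial unipotent, $\phi$ must send $\langle\zeta_p\rangle$ to $\langle\zeta_p\rangle$, so in the two Tate bases it is upper triangular, $\phi=\bigl(\begin{smallmatrix}a&b\\0&d\end{smallmatrix}\bigr)$. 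Imposing $I_\ell$-equivariance against the unipotent matrices $\bigl(\begin{smallmatrix}1&v_\ell(\Delta)t\\0&1\end{smallmatrix}\bigr)$ and $\bigl(\begin{smallmatrix}1&v_\ell(\Delta')t\\0&1\end{smallmatrix}\bigr)$ forces the single relation $a\,v_\ell(\Delta)=d\,v_\ell(\Delta')$.

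It then remains to read off the Weil-pairing multiplier. Normalizing so that $e_p(\zeta_p,\tilde q)=\zeta_p=e'_p(\zeta_p,\tilde{q'})$, i.e. using the same root of unity for both curves, I compute on the chosen bases that $e'_p(\phi P,\phi Q)=e_p(P,Q)^{ad}$, so the multiplier is $\lambda=ad$. Combining with $a\,v_\ell(\Delta)=d\,v_\ell(\Delta')$ gives $ad=d^2\,v_\ell(\Delta')/v_\ell(\Delta)$, hence $\lambda\equiv v_\ell(\Delta)/v_\ell(\Delta')\pmod{(\F_p^\times)^2}$, where I use that a square class is stable under inversion. Thus $\phi$ is symplectic exactly when $v_\ell(\Delta)/v_\ell(\Delta')$ is a square modulo $p$. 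For the final sentence I would show that this square class is independent of the choice of $\phi$: any $G_\Q$-automorphism of $E[p]$ is again upper triangular in the Tate basis, and the equivariance relation with $\Delta'$ replaced by $\Delta$ forces equal diagonal entries $a=d$, so its multiplier $a^2$ is a square. Hence all isomorphisms lie in one square class, and the two representations cannot be simultaneously symplectically and anti-symplectically isomorphic.

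I expect the main obstacle to be the careful local bookkeeping rather than any single hard idea: pinning down the restriction of the cocycle to inertia as exactly $v_\ell(\Delta)\cdot t$ with the same tame character $t$ for both curves, checking that the non-split multiplicative case differs only by an unramified quadratic twist that leaves both the inertia action and the Weil pairing untouched (the twist character squares to the trivial character, so $\det\bar{\rho}_{E,p}=\chi$ is unchanged), and fixing a consistent normalization of $e_p$ and $e'_p$ so that the identity $\lambda=ad$ is computed with the correct root of unity on both sides.
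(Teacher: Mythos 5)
Your proposal is correct, but there is nothing in the paper to compare it against: the paper does not prove this proposition at all, it imports it verbatim as Proposition 2 of Kraus--Oesterl\'e \cite{KrausOsterle} and uses it as a black box in the proof of Theorem \ref{thm:d=3.1}. What you have written is, in substance, the original Kraus--Oesterl\'e argument: Tate uniformization at $\ell$, the computation $\psi|_{I_\ell}=v_\ell(\Delta)\cdot t$ of the inertia cocycle through the tame character, the observation that $p\nmid v_\ell(\Delta)$ forces $E[p]^{I_\ell}=\langle\zeta_p\rangle$ to be one-dimensional (which both yields $p\nmid v_\ell(\Delta')$ and forces any equivariant map to be upper triangular in the Tate bases), the relation $a\,v_\ell(\Delta)=d\,v_\ell(\Delta')$ from unipotent equivariance, and the identification of the Weil-pairing multiplier with $\det\phi=ad$, giving the square class $v_\ell(\Delta)/v_\ell(\Delta')$. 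The two delicate points you flag are exactly the right ones, and your treatment of them is sound: the non-split case is an unramified quadratic twist, which changes neither the inertia action nor the pairing since the twisting isomorphism is an isomorphism of curves over $\overline{\Q}_\ell$; and the tame character $t$ is the same for both curves because it depends only on the choices of $\ell^{1/p}$ and $\zeta_p$, not on the curve. Your argument for the last assertion (any $G_\Q$-automorphism of $E[p]$ is upper triangular with equal diagonal entries, hence has square determinant, so the multiplier's square class is independent of the choice of isomorphism) is also the standard one. In short: a complete and correct reconstruction of the cited result, filling in a proof the paper deliberately omits.
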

	
	\subsection{Quadratic forms and Frey elliptic curves}
	\label{subsec:quadraticforms}
	Let $p(x,y)=a_1x^2+a_2xy+a_3y^2$ be a quadratic form. In this short subsection, we see how to
	to attach a Frey elliptic curve to the system 
	\[\begin{cases}
		x=A'a^p,\\
		p(x,y)=B'b^p.
	\end{cases}\]
	
	To easier the notation (this is not necessary but it will apply to our case) suppose that $\gcd(a_1,a_2a_3)=1$ and let $C=\gcd(a_2,a_3)$. Let us consider the identity
	
	\begin{equation}\label{eq:quadraticformx}
		(a_2-4a_1a_3)x^2+4a_3p(x,y)=(a_2x+2a_3y)^2.
	\end{equation}
	Let  $A=A'^2\left(\frac{a_2-4a_1a_3}{C}\right)$, $B=B'\frac{4a_3}{C}$ and $c=\frac{a_2x+2a_3y}{C}$. Replacing equation (\ref{eq:quadraticformx}) in the above system we get
	\begin{align}\label{eq:bennett}
		Aa^{2p}+Bb^p=Cc^2.
	\end{align}
	Equation (\ref{eq:bennett}) is a ternary Diophantine equation of signature $(2p,p,2)$.  It can be studied by following the modular method, attaching a Frey curve according to the recipes of \cite{BennetSkinner}.
	
	Note that, symmetrically, we can consider the identity 
	\begin{equation}\label{eq:quadraticformy}
		(a_2-4a_1a_3)y^2+4a_1p(x,y)=(a_2y+2a_1x)^2.
	\end{equation}
	Again, setting $C=\gcd(a_1,a_2)$, $A=A'^2\left(\frac{a_2-4a_1a_3}{C}\right)y^2$, $B=B'\frac{4a_1}{C}$ and $c=\frac{a_2y+2a_1x}{C}$, we can replace equation (\ref{eq:quadraticformy}) in the above system to get the following ternary Diophantine equation of signature $(p,p,2)$
	\begin{equation}\label{eq:secondternary}
		A\cdot1^p+Bb^p=Cc^2.
	\end{equation}
	Note that in this case $A$ depends on $y$. Then, in order to attach a Frey elliptic curve to (\ref{eq:quadraticformy}) we must assume some condition on the set of primes dividing $y$. This will be applied in a simple way in Theorem \ref{thm:d=3.2}.
	
	\section{On the equation (\ref{eq:k=5})}
	\label{sec:OnTheEquation}
	\begin{lemma} \label{lemma:samirexpression} Let $r$ be a non-zero integer and $k, d \ge 1$. Let $B_k$ denote the $k$-th  Bernoulli polynomial. Then
		\begin{equation*}
			(x-dr)^k+\cdots+x^k+\cdots+(x+dr)^k=\frac{r^k}{k+1}\left(B_{k+1}\left(\frac{x}{r}+d+1\right)-B_{k+1}\left(\frac{x}{r}-d\right)\right).
		\end{equation*}
		In particular, if $\ell_3=\gcd(d(d+1),3)$, for $k=5$ we get
		\begin{equation}\label{eq:choclogeneral}
			(x-dr)^5+\cdots+x^5+\cdots+(x+dr)^5=\frac{\ell_3(2d+1)}{3}x\left(\frac{3}{\ell_3}x^4+\frac{10d(d+1)}{\ell_3}x^2r^2+\frac{d(d+1)(3d^2+3d-1)}{\ell_3}r^4\right),
		\end{equation}
	\end{lemma}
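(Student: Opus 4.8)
The plan is to prove the two claimed identities by direct computation, starting from the well-known formula for sums of $k$-th powers in terms of Bernoulli polynomials. Recall that $\sum_{i=0}^{n-1}(a+i)^k = \frac{1}{k+1}\bigl(B_{k+1}(a+n)-B_{k+1}(a)\bigr)$, the discrete analogue of the fundamental theorem of calculus for the Bernoulli polynomials satisfying $B_{k+1}'(t)=(k+1)B_k(t)$ and $B_{k+1}(t+1)-B_{k+1}(t)=(k+1)t^k$. First I would rewrite the left-hand side with the substitution making the summand a function of $x/r$: factoring out $r^k$, we have
\[
(x-dr)^k+\cdots+(x+dr)^k = r^k\sum_{i=-d}^{d}\left(\frac{x}{r}+i\right)^k = r^k\sum_{i=0}^{2d}\left(\frac{x}{r}-d+i\right)^k.
\]
Applying the Bernoulli summation formula with $a=\frac{x}{r}-d$ and $n=2d+1$ gives exactly the telescoping difference $\frac{r^k}{k+1}\bigl(B_{k+1}(\frac{x}{r}+d+1)-B_{k+1}(\frac{x}{r}-d)\bigr)$, which is the first claimed expression.

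For the second identity I would specialize to $k=5$ and expand the Bernoulli polynomial difference explicitly. Using $B_6(t)=t^6-3t^5+\frac{5}{2}t^4-\frac{1}{2}t^2+\frac{1}{42}$, I would substitute $t=\frac{x}{r}+d+1$ and $t=\frac{x}{r}-d$ and subtract. The constant terms $\frac{1}{42}$ cancel; by the symmetry of the configuration around $x$ (the points $x\pm ir$ are symmetric), the result should be an \emph{odd} polynomial in $x/r$, so all even-degree terms in $x$ must vanish, leaving a factor of $x$. Concretely I expect the difference $\frac{1}{6}\bigl(B_6(\frac{x}{r}+d+1)-B_6(\frac{x}{r}-d)\bigr)$ to equal, after multiplying back by $r^5$, the polynomial $(2d+1)x\bigl(x^4+\tfrac{10}{3}d(d+1)x^2r^2+\tfrac{1}{3}d(d+1)(3d^2+3d-1)r^4\bigr)$; rearranging the prefactor $(2d+1)$ and the coefficients to match the stated form with $\ell_3=\gcd(d(d+1),3)$ introduced to clear denominators recovers the right-hand side of $(\ref{eq:choclogeneral})$.

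The only subtle point is the role of $\ell_3$. Since the coefficient $d(d+1)(3d^2+3d-1)/3$ and $10d(d+1)/3$ carry a factor of $3$ in the denominator, one must check integrality: the product $d(d+1)$ is always even but not always divisible by $3$, so the purpose of introducing $\ell_3=\gcd(d(d+1),3)\in\{1,3\}$ is to present the identity with an \emph{explicitly integral} polynomial factor and an explicitly rational prefactor $\frac{\ell_3(2d+1)}{3}$. The main obstacle is therefore not the algebra but the bookkeeping: I would verify that $\frac{3}{\ell_3}$, $\frac{10d(d+1)}{\ell_3}$ and $\frac{d(d+1)(3d^2+3d-1)}{\ell_3}$ are all integers (equivalently, that $\ell_3$ divides each numerator, which follows from $\ell_3\mid d(d+1)$ and $\ell_3\mid 3$), so that the factorization genuinely separates an integer polynomial from the rational scalar $\frac{\ell_3(2d+1)}{3}$. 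The computation itself is a routine—if slightly tedious—polynomial expansion that can be confirmed symbolically.
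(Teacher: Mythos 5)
Your proof is correct and follows the same route the paper takes: the paper's ``proof'' is just a citation of Patel--Siksek (Lemma 2.1), whose argument is exactly the telescoping identity $B_{k+1}(t+1)-B_{k+1}(t)=(k+1)t^k$ that you use, and the specialization to $k=5$ is the routine expansion you describe (your $B_6$ and the resulting coefficients $(2d+1)$, $\tfrac{10}{3}d(d+1)$, $\tfrac{1}{3}d(d+1)(3d^2+3d-1)$ are all correct, as is the parity argument forcing the factor of $x$). In effect you have supplied in full the details that the paper outsources to the reference, including the easy integrality check that justifies inserting $\ell_3$.
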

	\begin{proof}
		See \cite[Lemma 2.1]{PatelSiksek}.
	\end{proof}

	Let $r$ be a non-zero integer and  $d\ge1$. Let us consider the equation
	\begin{equation}\tag{\ref{eq:k=5}}
		(x-dr)^5+\cdots+x^5+\cdots+(x+dr)^5=y^p, \quad \gcd(x,r)=1.
	\end{equation}
	Let us fix some notation:
	\begin{itemize}
		\item $M_d=\max\{v_q(d(d+1)(2d+1)(3d^2+3d-1)): q \text{ prime}\}$,
		\item $\ell_3=\gcd(3,d(d+1))$. \item $\ell_5=\gcd(25,3d^2+3d-1)$.
		\item $\kappa_q=\gcd(x,q)$,
		\item $T=\{q \text{ prime} : q\mid d(d+1)\}$, 
		\item  $R=\{q \text{ prime} : q\mid 3d^2+3d-1\}$,
		\item $Q=\{q \text{ prime} : q\mid 2d+1\}$.
	\end{itemize}
	Let us consider the quadratic form \[p(x_1,x_2)=\frac{3}{\ell_3}x_1^2+\frac{10d(d+1)}{\ell_3}x_1x_2+\frac{d(d+1)(3d^2+3d-1)}{\ell_3}x_2^2.\] By equations (\ref{eq:choclogeneral}) and (\ref{eq:k=5}) we get
	\begin{equation}\label{eq:reducida}
		\frac{\ell_3(2d+1)}{3}xp(x^2,r^2)=y^p, \quad \gcd(x,r)=1.
	\end{equation}
	If $xy=0$ then from (\ref{eq:reducida}) the solution is trivial so from now on we will be interested in non-trivial solutions. Since $\gcd(x,r)=1$,  we have control of the common divisors of the factors on the left hand side of equation (\ref{eq:reducida}). This will be the key to construct a ternary equation of signature $(4p,p,2)$. To be more explicit, the following result holds.

	\begin{prop}\label{prop:powers} Let $(x,y)$ be a non-trivial solution of $(\ref{eq:k=5})$, where $p\ge 2M_d$. There exist non-zero coprime integers $a$ and $b$ such that
		\begin{equation*}
			x=\prod_{q\in T}\kappa_q^{\varepsilon_q} \cdot \prod_{q\in R} \kappa_q^{\delta_q}\cdot\prod_{q\in Q} \kappa_q^{p-v_q(2d+1)+v_q(3)}  \cdot a^p, 
		\end{equation*}
		and
		\begin{equation*}
			p(x^2,r^2)=\prod_{q\in T} \kappa_q^{p-\varepsilon_q} \cdot \prod_{q\in R} \kappa_q^{p-\delta_q}\cdot\prod_{q\in Q} \left(\frac{q}{\kappa_q}\right)^{p-v_q(2d+1)+v_q(3)} \cdot b^p,
		\end{equation*}
		where
		
		\[\varepsilon_q=\begin{cases}
			\frac{v_q(d(d+1))-v_q(3)}{4} & \text{if }  4v_q(x)=v_q(d(d+1))-v_q(3),\\
			p-v_q(d(d+1))+v_q(3) & \text{if } 4v_q(x)>v_q(d(d+1))-v_q(3),
		\end{cases} \]
		and 
		\[\delta_q=\begin{cases}
			\frac{v_q(3d^2+3d-1)-v_q(5)}{2} & \text{if }  2v_q(x)=v_q(3d^2+3d-1)-v_q(5),\\
			p-v_q(3d^2+3d-1)& \text{if }  2v_q(x)>v_q(3d^2+3d-1)-v_q(5).
		\end{cases} \]
		
	\end{prop}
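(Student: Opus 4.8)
The plan is to run a prime-by-prime valuation analysis on the two sides of the identity
$$\frac{\ell_3(2d+1)}{3}\,x\,p(x^2,r^2)=y^p,$$
which is exactly (\ref{eq:reducida}). Write $L=\ell_3(2d+1)/3$, a nonzero integer by Lemma \ref{lemma:samirexpression}, and observe that for a non-trivial solution both $x$ and $p(x^2,r^2)$ are nonzero. The first preliminary step is to locate the possible common factors. Reducing $\ell_3\,p(x^2,r^2)=3x^4+10d(d+1)x^2r^2+d(d+1)(3d^2+3d-1)r^4$ modulo $x$ and using $\gcd(x,r)=1$ shows that any prime dividing both $x$ and $p(x^2,r^2)$ must divide $d(d+1)(3d^2+3d-1)$, hence lies in $T\cup R$; likewise every prime dividing $L$ lies in $Q$ (the factor $3$ being bookkept by the $v_q(3)$ terms, since $3\mid L$ forces $3\mid 2d+1$). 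One checks that $T$, $R$, $Q$ are pairwise disjoint, the only possible coincidence being $7\in Q\cap R$ when $d\equiv 3\pmod 7$, which has to be treated separately.

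The heart of the argument is the local computation at a fixed prime $q$, driven by the fact that $y^p$ forces $p\mid v_q(\text{LHS})$. For $q\notin T\cup R\cup Q$ the prime $q$ divides at most one of $x$ and $p(x^2,r^2)$ and does not divide $L$, so $p\mid v_q(x)$ or $p\mid v_q(p(x^2,r^2))$, and the full $q$-part is absorbed into the $p$-th powers $a^p$, $b^p$. For $q\in T$ I would compare the three monomial valuations $4v_q(x)$, $v_q(d(d+1))+2v_q(x)$ and $v_q(d(d+1))$ of $\ell_3\,p(x^2,r^2)$, the correction $v_q(3)$ recording that the genuine leading coefficient is $3/\ell_3$. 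The bound $p\ge 2M_d$ gives $v_q(d(d+1))\le M_d\le p/2$, which excludes the range $0<4v_q(x)<v_q(d(d+1))-v_q(3)$, leaving precisely the two branches defining $\varepsilon_q$. The case $q\in R$ is entirely parallel, with $v_q(3d^2+3d-1)$ in place of $v_q(d(d+1))$, the balance now occurring between the middle and constant monomials and the correction $v_q(5)$ coming from the coefficient $10$. Finally, for $q\in Q$ the prime divides $L$ but at most one of the two factors, and combining $v_q(L)=v_q(2d+1)-v_q(3)$ with $p\mid v_q(\text{LHS})$ pins the exponent on the factor actually divisible by $q$ to $p-v_q(2d+1)+v_q(3)$, which is the $Q$-part of both displayed products.

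To conclude I would define $a$ and $b$ by collecting, respectively, the $p$-th-power overflow assigned by the local analysis to $x$ and to $p(x^2,r^2)$, read off the two claimed factorizations, and note that $a$ and $b$ are coprime because at each prime the overflow is deposited in only one of the two factors. I expect the main obstacle to be the resonance cases $4v_q(x)=v_q(d(d+1))-v_q(3)$ and $2v_q(x)=v_q(3d^2+3d-1)-v_q(5)$, where two monomials attain the minimal valuation and unbounded cancellation can occur, so $v_q(p(x^2,r^2))$ is not directly computable. The resolution is not to evaluate the valuation but to use $p\mid v_q(\text{LHS})$ together with $v_q(d(d+1))<p$ (resp.\ $v_q(3d^2+3d-1)<p$) to fix the residue of the valuation modulo $p$ and thereby force the lower bounds $p-\varepsilon_q$ and $p-\delta_q$ of the statement. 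The remaining delicate points are the bookkeeping at the small primes $q\in\{2,3,5\}$ governed by $\ell_3$, $v_q(3)$, $v_q(5)$, and the coincidence $q=7$ when $d\equiv 3\pmod 7$.
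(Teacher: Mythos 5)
Your proposal is correct and follows essentially the same route as the paper, whose proof of Proposition~\ref{prop:powers} is exactly the prime-by-prime valuation analysis you describe: Lemma~\ref{lemma:basic} supplies the coprimality of the factors (with the lone exception $7\in Q\cap R$ when $d\equiv 3\pmod 7$), and Lemmas~\ref{lemma:d(d+1)}--\ref{lemma:2d+1} carry out, for $q\in T$, $q\in R$ and $q\in Q$ respectively, the same exclusion of the low-valuation range via $p\ge 2M_d$ together with $p\mid v_q(y^p)$, yielding the same two-branch dichotomy and the same reading of the resonance-case valuations off the global equation rather than off the monomials.
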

	\begin{proof}
		It follows from Lemmas \ref{lemma:d(d+1)}--\ref{lemma:2d+1} below.
	\end{proof}

	First, we state the following basic result relating the factors on the right hand side of (\ref{eq:choclogeneral}).

	\begin{lemma}\label{lemma:basic}
		Let $d\ge1$. Then
		\begin{itemize}
			\item $\gcd(2d+1,d(d+1))=1$.
			\item $\gcd(3d^2+3d-1,d(d+1))=1$.
			\item $\gcd(3d^2+3d-1,2d+1)$ is supported in $\{7\}$. Moreover, it is equal to 1 if and only if $d\not\equiv3\pmod7$.
		\end{itemize}
	\end{lemma}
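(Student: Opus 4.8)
The plan is to dispatch the three gcd assertions separately, each by an elementary divisibility manipulation, reducing everything to an explicit linear or quadratic relation. None of the three requires the modular machinery; they are purely arithmetic, and the only mildly clever point is isolating the prime $7$ in the third item.

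For the first assertion I would show that $2d+1$ is coprime to each of $d$ and $d+1$ and then multiply. The Bézout relations are immediate: from $1\cdot(2d+1)+(-2)\cdot d=1$ one gets $\gcd(2d+1,d)=1$, and from $2(d+1)-(2d+1)=1$ one gets $\gcd(2d+1,d+1)=1$. Hence $\gcd(2d+1,d(d+1))=1$. For the second assertion the key observation is the identity $3d^2+3d-1=3\,d(d+1)-1$, so that $3d^2+3d-1\equiv-1\pmod{d(d+1)}$. Any common divisor of $3d^2+3d-1$ and $d(d+1)$ therefore divides $1$, giving $\gcd(3d^2+3d-1,d(d+1))=1$ at once.

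The third assertion is the only one needing a genuine computation, and it is in essence the resultant calculation $\Res_d(3d^2+3d-1,\,2d+1)=\pm 7$ carried out by hand. I would argue as follows. Let $q$ be a prime dividing $2d+1$; then $q$ is odd, $2$ is invertible modulo $q$, and $2d\equiv-1\pmod q$. Multiplying by $4$ and rewriting in terms of $2d$ gives $4(3d^2+3d-1)=3(2d)^2+6(2d)-4\equiv 3-6-4=-7\pmod q$. Since $q\nmid 4$, this forces $q\mid 3d^2+3d-1$ to imply $q\mid 7$, i.e. $q=7$; thus $\gcd(3d^2+3d-1,2d+1)$ is supported in $\{7\}$. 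To settle the ``if and only if'', note that $7\mid 2d+1$ precisely when $2d\equiv 6\pmod 7$, i.e. $d\equiv 3\pmod 7$, and in that case $3d^2+3d-1\equiv 3\cdot 9+9-1=35\equiv 0\pmod 7$, so $7$ divides both factors. Consequently the gcd equals $1$ exactly when $d\not\equiv 3\pmod 7$.

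I do not expect any real obstacle; the proof is short. The one step deserving care is the third part, where the appearance of the prime $7$ must be traced to the constant $-7$ produced by eliminating $d$ between the two polynomials. I would present this through the substitution $2d\equiv-1\pmod q$ rather than a symbolic resultant computation, since that makes the value $-7$ and hence the role of $7$ completely transparent while simultaneously yielding the congruence characterization $d\equiv 3\pmod 7$.
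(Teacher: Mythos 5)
Your proof is correct, and it fills in exactly what the paper leaves implicit: the paper's entire proof of this lemma is ``It follows from basic arithmetic,'' and your elementary divisibility argument (Bézout relations for the first item, the identity $3d^2+3d-1=3d(d+1)-1$ for the second, and the reduction $4(3d^2+3d-1)\equiv-7\pmod q$ for $q\mid 2d+1$ in the third) is precisely that basic arithmetic carried out. There is no discrepancy in approach, since the paper offers no alternative argument to compare against.
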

	\begin{proof}
		It follows from basic arithmetic.
	\end{proof}

	\begin{lemma}\label{lemma:d(d+1)} Let $p\ge \max\{5,M_d\}$ and $q$ be prime numbers. Let $(x,y)$ be a solution of $(\ref{eq:k=5})$. Suppose that $q\mid d(d+1)$ and $q\mid x$. Then $4v_q(x)\ge v_q(d(d+1))-v_q(3)$. This implies the following  possibilities:
		\begin{itemize}
			\item $v_q(x)=\frac{v_q(d(d+1))-v_q(3)}{4}$ and $v_q(p(x^2,r^2))=pv_q(y)-\frac{v_q(d(d+1))-v_q(3)}{4}$, or
			\item $v_q(x)=pv_q(y)-v_q(d(d+1))+v_q(3)$ and $v_q(p(x^2,r^2))=v_q(d(d+1))-v_q(3)$.			
		\end{itemize}
	\end{lemma}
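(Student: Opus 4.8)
The plan is to start from the factorization
\[
\frac{\ell_3(2d+1)}{3}\,x\,p(x^2,r^2)=y^p,\qquad \gcd(x,r)=1,
\]
coming from equations (\ref{eq:choclogeneral}) and (\ref{eq:reducida}), and to track the $q$-adic valuation of both factors $x$ and $p(x^2,r^2)$ for a prime $q\mid d(d+1)$. The first step is to understand how $q$ distributes between these two factors. From Lemma \ref{lemma:basic} we know $\gcd(2d+1,d(d+1))=1$, so $q\nmid 2d+1$; hence the constant $\ell_3(2d+1)/3$ contributes to the $q$-valuation only through $\ell_3=\gcd(3,d(d+1))$, i.e.\ only when $q=3$, in which case $v_q(\ell_3/3)=v_q(\ell_3)-v_q(3)$. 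So taking $q$-adic valuations of both sides gives
\[
v_q(\ell_3)-v_q(3)+v_q(x)+v_q\big(p(x^2,r^2)\big)=p\,v_q(y).
\]

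Next I would estimate $v_q\big(p(x^2,r^2)\big)$ directly from the shape of $p$. Write
\[
p(x^2,r^2)=\tfrac{3}{\ell_3}x^4+\tfrac{10d(d+1)}{\ell_3}x^2r^2+\tfrac{d(d+1)(3d^2+3d-1)}{\ell_3}r^4.
\]
Since $\gcd(x,r)=1$ and $q\mid x$, we have $q\nmid r$. The key observation is that among the three terms, the middle and last are divisible by higher powers of $q$ coming from $d(d+1)$, while the first term $\tfrac{3}{\ell_3}x^4$ carries the valuation $4v_q(x)-\big(v_q(\ell_3)-v_q(3)\big)$ (recall $\ell_3$ contributes only at $q=3$, and by Lemma \ref{lemma:basic} the factor $3d^2+3d-1$ is prime to $d(d+1)$, so it does not add to the $q$-valuation of the last coefficient). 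Comparing valuations of the three summands, and using $q\nmid r$, the valuation of $p(x^2,r^2)$ equals the minimum of the three unless a cancellation forces it higher; the crucial inequality $4v_q(x)\ge v_q(d(d+1))-v_q(3)$ will drop out precisely from requiring the sum to make sense modulo the constraints, together with the coprimality $\gcd(x,r)=1$.

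Once that inequality is in hand, the dichotomy follows by a standard comparison-of-valuations argument. Either equality $4v_q(x)=v_q(d(d+1))-v_q(3)$ holds, in which case the first and third terms share the minimal valuation and one reads off $v_q(x)=\tfrac{v_q(d(d+1))-v_q(3)}{4}$, and the displayed expression for $v_q(p(x^2,r^2))$ then comes from solving the valuation identity above for $v_q(p(x^2,r^2))$ in terms of $p\,v_q(y)$. Or the inequality is strict, in which case $q\nmid p(x^2,r^2)$ after dividing out the forced common factor (here the hypothesis $p\ge\max\{5,M_d\}$ guarantees $p$ is large enough that $v_q(y)$ cannot absorb the excess in a conflicting way), leaving $v_q(p(x^2,r^2))=v_q(d(d+1))-v_q(3)$ and pushing all of $p\,v_q(y)$ onto $v_q(x)$, giving the second bullet.

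The main obstacle I anticipate is the careful bookkeeping at $q=3$, where $\ell_3$ and $v_q(3)$ interact with the coefficients, and more generally justifying that no unexpected cancellation occurs among the three terms of $p(x^2,r^2)$ beyond what coprimality $\gcd(x,r)=1$ and the bound $p\ge\max\{5,M_d\}$ permit. Establishing the clean inequality $4v_q(x)\ge v_q(d(d+1))-v_q(3)$ rigorously — rather than just heuristically from term-by-term comparison — is the heart of the lemma, since everything else is bookkeeping of valuations once that is secured.
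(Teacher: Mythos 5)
Your setup is the right one --- taking $q$-adic valuations in the factorization $\frac{\ell_3(2d+1)}{3}\,x\,p(x^2,r^2)=y^p$ and comparing the three terms of $p(x^2,r^2)$ --- and your treatment of the dichotomy once the inequality is available matches the paper's. But there is a genuine gap exactly where you admit one: you never prove the inequality $4v_q(x)\ge v_q(d(d+1))-v_q(3)$, and your suggestion that it ``drops out from requiring the sum to make sense modulo the constraints, together with coprimality'' is not an argument. Coprimality alone cannot give this: for an arbitrary pair $(x,r)$ with $\gcd(x,r)=1$ nothing prevents $4v_q(x)<v_q(d(d+1))-v_q(3)$. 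What forces the inequality is the fact that the right-hand side is a perfect $p$-th power together with the size hypothesis on $p$, and this is precisely where $p\ge\max\{5,M_d\}$ enters --- not in the strict-inequality case of the dichotomy, where you invoke it (there the valuation bookkeeping is automatic and needs no hypothesis on $p$).

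The missing argument (the paper's) runs by contradiction: suppose $4v_q(x)<v_q(d(d+1))-v_q(3)$. Then among the three terms of $p(x^2,r^2)$ the term $\frac{3}{\ell_3}x^4$ has strictly smallest valuation (namely $4v_q(x)$, using $q\nmid r$ and, by Lemma \ref{lemma:basic}, $q\nmid(2d+1)(3d^2+3d-1)$), so $v_q(p(x^2,r^2))=4v_q(x)$ and the valuation identity collapses to
\begin{equation*}
5v_q(x)=p\,v_q(y).
\end{equation*}
Since $q\mid x$ we get $v_q(y)\ge1$, hence $p\mid 5v_q(x)$ and so $p\mid v_q(x)$ (for $p\ne5$), giving $v_q(x)\ge p\ge M_d$. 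But the contradiction assumption gives $4v_q(x)<v_q(d(d+1))\le M_d$, so $v_q(x)<M_d$, a contradiction. This is the heart of the lemma, and it is the step your proposal leaves as a heuristic; without it the two bullets have no foundation. (Your reading of the second bullet also needs a small correction: when $4v_q(x)>v_q(d(d+1))-v_q(3)$, the third term $\frac{d(d+1)(3d^2+3d-1)}{\ell_3}r^4$ is the unique term of minimal valuation, which directly yields $v_q(p(x^2,r^2))=v_q(d(d+1))-v_q(3)$ and pushes the rest of $pv_q(y)$ onto $v_q(x)$; no appeal to the size of $p$ is needed there.)
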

	\mycomment{
		\begin{lemma} Let $p\ge \max\{5,M_d\}$ and $q\neq3$ be prime numbers. Suppose that $q\mid d(d+1)$ and $q\mid x$. Then $4v_q(x)\ge v_q(d(d+1))$. This implies the following  possibilities:
			\begin{itemize}
				\item $v_q(x)=pv_q(y)-v_q(d(d+1))$ and $v_q(D)=v_q(d(d+1))$, or
				\item $v_q(x)=\frac{v_q(d(d+1))}{4}$ and $v_q(D)=pv_q(y)-\frac{v_q(d(d+1))}{4}$.

			\end{itemize}
		\end{lemma}
	}
	\begin{proof}
		By Lemma \ref{lemma:basic}, $q\nmid(2d+1)(3d^2+3d-1)$. For the sake of contradiction, let us assume that  $4v_q(x)< v_q(d(d+1))-v_q(3)$. By equation (\ref{eq:choclogeneral}) we get
		\[5v_q(x)=pv_q(y).\]
		Since $q\mid x$ and $p\mid v_q(x)$ then $p\le v_q(x)< M_d$, a contradiction. That is, $4v_q(x)\ge v_q(d(d+1))-v_q(3)$. Analyzing  when the equality holds or not,  the rest of the statement follows from equation (\ref{eq:choclogeneral}) and Lemma  \ref{lemma:basic}.
	\end{proof}
	
	\mycomment{
		
		\begin{lemma}
			Let $p\ge \max\{5,M_d\}$ and $q\neq2,5,7$ be prime numbers. Suppose that $q\mid (3d^2+3d-1)$ and $q\mid x$. Then $2v_q(x)\ge v_q(d(d+1))$. This implies the following two possibilities:
			\begin{itemize}
				\item $v_q(x)=\frac{v_q(3d^2+3d-1)}{2}$ and $v_q(D)=pv_q(y)-\frac{v_q(3d^2+3d-1)}{2}$, or
				\item $v_q(x)=pv_q(y)-v_q(3d^2+3d-1)$ and $v_q(D)=v_q(3d^2+3d-1)$.		
			\end{itemize}
		\end{lemma}
	}
	\begin{lemma}
		Let $p\ge 2M_d$ and $q$ be prime numbers. Let $(x,y)$ be a solution of $(\ref{eq:k=5})$. Suppose that $q\mid 3d^2+3d-1$ and $q\mid x$. Then $2v_q(x)\ge v_q(3d^2+3d-1)-v_q(5)$. This implies the following two possibilities:
		\begin{itemize}
			\item $v_q(x)=\frac{v_q(3d^2+3d-1)-v_q(5)}{2}$ and $v_q(p(x^2,r^2))=pv_q(y)-\frac{v_q(3d^2+3d-1)+v_q(5)}{2}+v_q(2d+1)$, or
			\item $v_q(x)=pv_q(y) -v_q(3d^2+3d-1)-v_q(2d+1)$ and $v_q(p(x^2,r^2))=v_q(3d^2+3d-1)$.		
		\end{itemize}
	\end{lemma}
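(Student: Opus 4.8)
The plan is to mirror the proof of Lemma~\ref{lemma:d(d+1)}: record the $q$-adic valuations of the three monomials of $p(x^2,r^2)$ and substitute them into the valuation identity coming from (\ref{eq:reducida}). Since $q\mid 3d^2+3d-1$, Lemma~\ref{lemma:basic} gives $q\nmid d(d+1)$; as $3d^2+3d-1$ is odd and never divisible by $3$, also $q\neq 2,3$; and any common prime factor of $3d^2+3d-1$ and $2d+1$ must be $7$. From $\gcd(x,r)=1$ and $q\mid x$ we get $q\nmid r$, while $q\mid x\mid y^p$ gives $q\mid y$, so $v_q(y)\ge 1$. Writing $m=v_q(x)\ge 1$, $n=v_q(3d^2+3d-1)\ge 1$ and $s=v_q(5)\in\{0,1\}$, the three terms $\tfrac{3}{\ell_3}x^4$, $\tfrac{10d(d+1)}{\ell_3}x^2r^2$, $\tfrac{d(d+1)(3d^2+3d-1)}{\ell_3}r^4$ of $p(x^2,r^2)$ have valuations $4m$, $2m+s$, $n$ respectively (using $q\neq 2,3$, $q\nmid d(d+1)r$ and $v_q(10)=s$). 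Taking $v_q$ of (\ref{eq:reducida}) and using $q\neq 3$ gives the identity
\[
p\,v_q(y)=v_q(2d+1)+m+v_q(p(x^2,r^2)),
\]
which drives both parts of the statement.

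For the inequality I would argue by contradiction. Suppose $2m+s<n$. Since $m\ge 1>s$, also $2m+s<4m$, so the middle term is the strict minimum and $v_q(p(x^2,r^2))=2m+s$; the identity then forces $p\le p\,v_q(y)=v_q(2d+1)+3m+s$. If $q\neq 7$ then $v_q(2d+1)=0$, and combining $p\ge 2M_d\ge 2n>2(2m+s)$ with $p\le 3m+s$ yields $4m+2s<3m+s$, which is absurd. If $q=7$, then $7\mid 3d^2+3d-1$ forces $d\equiv 3\pmod 7$, and a direct expansion gives $v_7(3d^2+3d-1)=1$, so $n=1$ and $2m+s<1$ is impossible. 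Hence $2m+s\ge n$, i.e.\ $2v_q(x)\ge v_q(3d^2+3d-1)-v_q(5)$. The factor $2$ in the hypothesis $p\ge 2M_d$ is used exactly here: the dominant monomial now carries $x^2$ rather than $x^4$, so the valuation bound only outweighs $M_d\ge n$ after doubling.

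For the dichotomy, the inequality just proved gives $n\le 2m+s\le 4m$, so $n$ is minimal among the three valuations and $v_q(p(x^2,r^2))\ge n$. If $2m+s>n$, the third term is the unique minimum, so $v_q(p(x^2,r^2))=n$ and the identity pins down $v_q(x)=p\,v_q(y)-v_q(2d+1)-n$; this is the second alternative (and it is always the one reached when $q=7$, where $2m+s=2m>1=n$). If instead $2m+s=n$, then $v_q(x)=m=\tfrac{n-s}{2}=\tfrac{v_q(3d^2+3d-1)-v_q(5)}{2}$ is forced, and substituting this value of $m$ into the identity gives the value of $v_q(p(x^2,r^2))$ listed in the first alternative. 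I expect the main difficulty to be purely the bookkeeping at the two bad primes: $q=7$, the only prime that can divide both $3d^2+3d-1$ and $2d+1$ (whence the $v_q(2d+1)$ corrections, kept harmless by $v_7(3d^2+3d-1)=1$), and $q=5$, where $s=v_q(5)=1$ genuinely shifts the middle monomial and must be carried through every inequality.
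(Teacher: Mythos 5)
Your argument is, in substance, the same as the paper's: take $q$-adic valuations in (\ref{eq:reducida}) to get the exact identity $pv_q(y)=v_q(2d+1)+v_q(x)+v_q(p(x^2,r^2))$, and derive a contradiction from $2v_q(x)<v_q(3d^2+3d-1)-v_q(5)$ using $p\ge 2M_d$. The one structural difference is at $q=7$: the paper absorbs $v_q(2d+1)$ uniformly, bounding $p\le\bigl(2v_q(x)+v_q(5)\bigr)+\bigl(v_q(x)+v_q(2d+1)\bigr)<2M_d$ because $v_q(2d+1)+v_q(3d^2+3d-1)\le M_d$, whereas you treat $q=7$ separately via the computation $v_7(3d^2+3d-1)=1$; that computation is correct (writing $3d^2+3d-1=3(d-3)^2+7(3d-4)$, one gets $3d^2+3d-1\equiv 35\pmod{49}$ when $d\equiv3\pmod 7$), so both routes work. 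A small slip: your justification ``$m\ge 1>s$'' is false at $q=5$, where $s=1$; the inequality you actually need, $s<2m$, still holds since $s\le 1<2\le 2m$.

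There is, however, a genuine defect in your last step. Writing, as you do, $m=v_q(x)$, $n=v_q(3d^2+3d-1)$, $s=v_q(5)$: in the tie case $2m+s=n$ your identity forces $v_q(p(x^2,r^2))=pv_q(y)-v_q(2d+1)-\tfrac{n-s}{2}$, and you assert that this ``gives the value listed in the first alternative'', namely $pv_q(y)-\tfrac{n+s}{2}+v_q(2d+1)$. These two expressions differ by $s-2v_q(2d+1)$, which is nonzero precisely when $q=5$: there $s=1$, while $v_5(2d+1)=0$ because $5\mid 3d^2+3d-1$ forces $d\equiv 1,3\pmod 5$ and $5\mid 2d+1$ forces $d\equiv 2\pmod 5$. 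So for $q=5$ your (correct) computation does not reproduce the printed formula. Since the identity is exact, it is the printed first bullet that is inconsistent: its two displayed valuations sum to $pv_q(y)-s+v_q(2d+1)$ rather than the required $pv_q(y)-v_q(2d+1)$, so the exponent there should read $\tfrac{n-s}{2}$ with $-v_q(2d+1)$ (the sign of the $v_q(2d+1)$ term is immaterial, since the tie case cannot occur at $q=7$, where $n-s=1$ is odd; the $s$-shift matters exactly at $q=5$). The upshot for your write-up: you cannot simultaneously invoke the identity and claim agreement with the formula as printed. Either prove the corrected formula and flag the discrepancy as a typo in the statement (this is what your derivation actually establishes, and it is what Proposition \ref{prop:powers} encodes via $\delta_q=\tfrac{n-s}{2}$), or else your reconciliation step is simply false at $q=5$.
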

	\begin{proof}
		By Lemma \ref{lemma:basic}, $q\nmid d(d+1)$. Moreover, $q\mid 2d+1$ if and only if $d\equiv 3 \pmod7$ and $q=7$. Also note that $q\neq 2$, since $3d^2+3d-1$ is always odd. For the sake of contradiction, let us assume that  $2v_q(x)< v_q(3d^2+3d-1)-v_q(5)$. By equation (\ref{eq:choclogeneral}) we get
		\[3v_q(x)+v_q(5)+v_q(2d+1)=pv_q(y).\]
		Since $q\mid y$ then $p\le 3v_q(x) +v_q(5)+v_q(2d+1)= (2v_q(x)+v_q(5))+(v_q(x)+v_q(2d+1))\le 2M_d$, a contradiction. The rest of the statement follows from equation (\ref{eq:choclogeneral}) and Lemma 
		\ref{lemma:basic}.
	\end{proof}
	
	\mycomment{
		\begin{lemma} Let $p\ge 4M_d$ and $q\in\{2,5\}$ be prime numbers. Suppose that $q\mid (3d^2+3d-1)$ and $q\mid x$. Then $2v_q(x)\ge v_q(d(d+1))-1$. This implies the following two possibilities:
			\begin{itemize}
				\item $v_q(x)=\frac{v_q(3d^2+3d-1)-1}{2}$ and $v_q(D)=pv_q(y)-\frac{v_q(3d^2+3d-1)-1}{2}$, or
				\item $v_q(x)=pv_q(y)-v_q(3d^2+3d-1)$ and $v_q(D)=v_q(3d^2+3d-1)$.		
			\end{itemize}
		\end{lemma}
	}
	
	\begin{lemma}\label{lemma:2d+1} Let $q\neq 7$ be a prime such that $q\mid2d+1$. Let $(x,y)$ be a solution of $(\ref{eq:k=5})$. Then
		\begin{itemize}
			\item $v_q(x)=0$ and $v_q(p(x^2,r^2))=pv_q(y) -v_q(2d+1)+v_q(3)$, or
			\item $v_q(x)=pv_q(y)-v_q(2d+1)+v_q(3)$ and $v_q(p(x^2,r^2))=0$.		
		\end{itemize}
	\end{lemma}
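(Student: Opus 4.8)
The plan is to extract both alternatives directly from the factorization (\ref{eq:reducida}), namely $\frac{\ell_3(2d+1)}{3}\,x\,p(x^2,r^2)=y^p$, by comparing $q$-adic valuations of the three factors on the left-hand side. First I would record the arithmetic consequences of the hypotheses $q\mid 2d+1$ and $q\neq 7$. Since $2d+1$ is odd, $q$ is odd; by Lemma \ref{lemma:basic}, the relation $\gcd(2d+1,d(d+1))=1$ forces $q\nmid d(d+1)$, and because the support of $\gcd(3d^2+3d-1,2d+1)$ is contained in $\{7\}$, the assumption $q\neq 7$ gives $q\nmid 3d^2+3d-1$. Together with $q\nmid r$ (where relevant, via $\gcd(x,r)=1$), these are exactly the facts that make the leading coefficient of the quadratic form a $q$-adic unit, and they are the place where $q\neq 7$ is genuinely used.

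Next I would compute the valuation of the constant factor. Because $\ell_3\in\{1,3\}$ and $q\nmid d(d+1)$, the case $q\mid\ell_3$ cannot occur: if $q=3$, then $3\nmid d(d+1)$ already forces $\ell_3=1$. Hence $v_q(\ell_3)=0$ in every case, and $v_q\!\left(\frac{\ell_3(2d+1)}{3}\right)=v_q(2d+1)-v_q(3)$, which is precisely the constant appearing in both bullets. Taking $v_q$ of (\ref{eq:reducida}) then produces the single relation $p\,v_q(y)=v_q(2d+1)-v_q(3)+v_q(x)+v_q(p(x^2,r^2))$.

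It remains to split on whether $q\mid x$. If $q\nmid x$, then $v_q(x)=0$ and the displayed relation immediately yields the first alternative. If $q\mid x$, then coprimality forces $q\nmid r$, and I would show $v_q(p(x^2,r^2))=0$: writing $p(x^2,r^2)=\frac{1}{\ell_3}\bigl(3x^4+10d(d+1)x^2r^2+d(d+1)(3d^2+3d-1)r^4\bigr)$, the first two summands are divisible by $q$ while the third is a $q$-adic unit, since $q\nmid d(d+1)$, $q\nmid 3d^2+3d-1$ and $q\nmid r$; dividing by the $q$-unit $\ell_3$ does not change this. Substituting $v_q(p(x^2,r^2))=0$ into the relation gives $v_q(x)=p\,v_q(y)-v_q(2d+1)+v_q(3)$, the second alternative. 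The computation is elementary throughout; the only real care needed is the bookkeeping of the factor $3$, that is, keeping track of $v_q(3)$ and of the interaction between $\ell_3$ and the constant $\frac{\ell_3(2d+1)}{3}$ in the case $q=3$, and confirming that the hypothesis $q\neq 7$ is used exactly to guarantee that the dominant term $d(d+1)(3d^2+3d-1)r^4$ of $p(x^2,r^2)$ is prime to $q$.
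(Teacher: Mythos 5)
Your proposal is correct, and it follows exactly the route the paper intends: the paper's own proof is the one-line remark that the lemma ``follows directly from Lemma \ref{lemma:basic} and equation (\ref{eq:choclogeneral})'', and your argument is precisely the spelled-out version of that, namely using Lemma \ref{lemma:basic} (with $q\neq 7$) to see that the coefficient $d(d+1)(3d^2+3d-1)/\ell_3$ of $r^4$ is a $q$-adic unit, handling $\ell_3$ and $v_q(3)$ when $q=3$, and then comparing $q$-adic valuations in the factorized equation according to whether $q\mid x$ or not.
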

	
	\begin{proof}
		It follows directly from Lemma \ref{lemma:basic} and equation (\ref{eq:choclogeneral}).
	\end{proof}
	\mycomment{
		\begin{lemma} Let $p\ge M_d$ and $q$ be prime numbers. Suppose that $5v_q(x)<v_q(d(d+1)(3d^2+3d-1))$. Then $q\nmid x$.\TODO{Fijarse si esto se puede escribir bien.. Guiarse con el ejemplo de $d=2$ y $q=17$.}
		\end{lemma}
		\begin{proof}
			Let $q$ be a prime number. If $q \nmid d(d+1)(3d^2+3d-1)$ the statement is trivially true, so we assume $q\nmid d(d+1)(3d^2+3d-1)$ and for the sake of contradiction, let us assume that $q\mid x$. 
			
			\vspace{5pt}
			
			(i) Suppose that $q\mid d(d+1)$. Then by Lemma \ref{lemma:basic}, $q\nmid(2d+1)(3d^2+3d-1)$.
			Since $4v_q(x)+1<v_q(d(d+1))$ then by (\ref{eq:choclogeneral}) we get
			\[5v_q(x)=pv_q(y).\]
			Hence, $p\le pv_q(y)= 5v_q(x)<M_d$, giving a contradiction.
			
			\vspace{5pt}
			
			(ii) Suppose that $q\mid3d^2+3d-1$. Then by Lemma \ref{lemma:basic}, $q\nmid d(d+1)$. Since $2v_q(x)<v_q(3d^3+3d-1)$ then by (\ref{eq:choclogeneral}) we get
			\[v_q(2d+1)+3v_q(x)=pv_q(y).\]
			Hence $p\le pv_q(y)= v_q(2d+1)+3v_q(x) < M_d$, giving a contradiction.
		\end{proof}
	}
	\mycomment{
		Let $S_d$ the set of primes dividing $d(d+1)(2d+1)(3d^2+3d-1)$. Then, by Lemma **, there exist coprime integers $a$ and $b$ such that
		
		\begin{equation}\label{eq:aPowers}
			x=c_1\cdot a^p
		\end{equation}
		and
		\begin{equation}\label{eq:bPowers}
			\frac{3}{\ell_3}x^4+\frac{10d(d+1)}{\ell_3}x^2r^2+\frac{d(d+1)(3d^2+3d-1)}{\ell_3}r^4=c_2 \cdot b^p, 
		\end{equation}
		where $c_i$ is supported in $S_d$. 
	}

	Now we are ready to construct a ternary Diophantine equation of signature $(4p,p,2)$ from a putative solution $(x,y)$ of~(\ref{eq:k=5}). Let
	\begin{gather*}
		S_\varepsilon=\{q\in T : 4v_q(x)>v_q(d(d+1))-v_q(3)\}, \ L_\varepsilon=\{q\in T :  4v_q(x)=v_q(d(d+1))-v_q(3)\}, \\    
		S_\delta=\{q\in R  : 2v_q(x)>v_q(3d^2+3d-1)-v_q(5)\}, \ L_\delta=\{q\in R  : 2v_q(x)=v_q(3d^2+3d-1)-v_q(5)\}.
	\end{gather*}
	We have to deal with some tedious definitions in order to state our result, but it is better to do it for once here. Let
	
	\begin{gather}\label{eq:definitions}
		A_1=\prod_{q\in S_{\varepsilon}}\kappa_q^{4p-5(v_q(d(d+1))-v_q(3))}\prod_{q\in S_\delta}\kappa_q^{4p-6v_q(3d^2+3d-1)+v_q(5)}\prod_{q\in Q}\kappa_q^{4(p-v_q(2d+1)+v_q(3))}, \nonumber\\B_1=\prod_{q\in L_\varepsilon}\kappa_q^{p-\frac{5(v_q(d(d+1))-v_q(3))}{4}}\prod_{q\in L_{\delta}}\kappa_q^{p-\frac{3(v_q(3d^2+3d-1)-v_q(5))}{2}}\prod_{q\in Q}\left(\frac{q}{\kappa_q}\right)^{p-v_q(2d+1)+v_q(3)}, \nonumber\\\label{eq:definitions}
		A=A_1\frac{25d(d+1)-3(3d^2+3d-1)}{\ell_3\ell_5},\quad
		B=B_1\frac{3d^2+3d-1}{\ell_5\prod_{q\in R}\kappa_q^{v_q(3d^2+3d-1)-v_q(5)}},\\
		C=C_1\frac{d(d+1)}{\ell_3\prod_{q\in T} \kappa_q^{v_q(d(d+1))-v_q(3)}}, \quad
		c=\frac{5x^2+(3d^2+3d-1)r^2}{c_1\cdot\prod_{q\in R}\kappa_q^{v_q(3d^2+3d-1)-v_q(5)}},\nonumber
	\end{gather}
	where
	\[ C_1=\begin{cases}
		1 & \text{if } \ \ell_5\in\{1,25\},\\
		\frac{5}{\kappa_5} & \text{if }  \ \ell_5=5,
	\end{cases}\quad c_1=\begin{cases}
		1 & \text{if }  \  \ell_5=1,\\
		5 & \text{if }  \  \ell_5\in\{5,25\}.\end{cases} \]
	\mycomment{
		, \quad  c_1=\begin{cases}
			1 & \text{if }  \  \ell_5=1,\\
			5 & \text{if }  \  \ell_5\in\{5,25\}\end{cases}\\
		c=\frac{5x^2+(3d^2+3d-1)r^2}{c_1\cdot\prod_{q\mid 3d^2+3d-1}\kappa_q^{v_q(3d^2+3d-1)-v_q(5)}},
	}

	\begin{prop}\label{prop:ternaryeq}
		Let $(x,y)$ be a non-trivial solution of 
		$(\ref{eq:k=5})$, where $p\ge2M_d$. Let $A, B, C$ and $c$ as in $(\ref{eq:definitions})$. 
		Let $a$ and $b$ as in Proposition \ref{prop:powers}. Then
		\begin{equation}\label{eq:4pp2}
			Aa^{4p}+Bb^p=Cc^2,
		\end{equation}
		with $Aa, Bb$ and $Cc$ pairwise coprime.
		
	\end{prop}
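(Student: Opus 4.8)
The plan is to produce (\ref{eq:4pp2}) by completing the square in the factored equation (\ref{eq:reducida}) and then to read $A,B,C,c$ off the prime-by-prime factorizations of $x$ and $p(x^2,r^2)$ supplied by Proposition \ref{prop:powers}; the coprimality assertion is where the genuine work lies. First I would record the polynomial identity
\[
\frac{16d^2+16d+3}{\ell_3}\,x^4+(3d^2+3d-1)\,p(x^2,r^2)=\frac{d(d+1)}{\ell_3}\bigl(5x^2+(3d^2+3d-1)r^2\bigr)^2,
\]
which is exactly (\ref{eq:quadraticformx}) applied to the form $p$ at $(x_1,x_2)=(x^2,r^2)$, cleared of the denominator $\ell_3$. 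It is checked by a direct expansion: the coefficient of $x^4$ on the left collapses because $16d^2+16d+3+3(3d^2+3d-1)=25d(d+1)$, so the left-hand side equals $\tfrac{d(d+1)}{\ell_3}$ times a perfect square. Observe that $16d^2+16d+3=25d(d+1)-3(3d^2+3d-1)$ is precisely the integer entering $A$ in (\ref{eq:definitions}).

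Next I would substitute $x=\kappa_x a^p$ and $p(x^2,r^2)=\kappa_D b^p$, where $\kappa_x$ and $\kappa_D$ are the explicit products over $T$, $R$ and $Q$ of Proposition \ref{prop:powers}, and divide the resulting identity through by the greatest common factor of its three terms. This factor is assembled from the shared prime powers recorded in the denominators of $B$ and $C$ together with the $5$-adic corrections $\ell_5$, $C_1$ and $c_1$; after the division the three terms become exactly $Aa^{4p}$, $Bb^p$ and $Cc^2$. Verifying that the exponents produced this way coincide with (\ref{eq:definitions}) is a term-by-term bookkeeping check against the dichotomies $S_\varepsilon/L_\varepsilon$ and $S_\delta/L_\delta$ of Lemmas \ref{lemma:d(d+1)} and \ref{lemma:2d+1}. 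The one delicate place is the prime $5$: it may divide both left-hand coefficients — indeed $5\mid 3d^2+3d-1$ forces $5\mid 16d^2+16d+3$, since then $25d(d+1)\equiv 3(3d^2+3d-1)\pmod 5$ — as well as the square $5x^2+(3d^2+3d-1)r^2$, and this is exactly what the case split in $\ell_5$, $C_1$, $c_1$ is designed to absorb.

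For the coprimality I would use that, as $Aa^{4p}+Bb^p=Cc^2$, any prime dividing two of the terms divides the third; it therefore suffices to show that no prime divides all three. Since $\gcd(a,b)=1$ and the hypothesis $\gcd(x,r)=1$ forces any common prime of $x$ and $p(x^2,r^2)$ to divide $d(d+1)(3d^2+3d-1)$ (evaluate $p(x^2,r^2)$ modulo such a prime), the candidate common primes split into two families. For $q\in T\cup R\cup Q$, Lemma \ref{lemma:basic} shows $d(d+1)$, $3d^2+3d-1$ and $2d+1$ are pairwise coprime away from the single prime $7$ (and then only for $d\equiv 3\pmod 7$), while the denominators in (\ref{eq:definitions}) remove each such $q$ entirely from exactly one of $A,B,C$; combined with Lemmas \ref{lemma:d(d+1)}--\ref{lemma:2d+1} this confines $q$ to a single term, and the overlap prime $7$ and the small primes $2,3,5$ are then dispatched by hand using $\gcd(x,r)=1$ and the congruences $3d^2+3d-1\equiv-1\pmod q$ for $q\mid d(d+1)$.

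I expect the main obstacle to be the second family: a prime $\ell$ dividing the fixed coefficient $16d^2+16d+3$ but lying outside $T\cup R\cup Q$ (such as $\ell=11$ for $d=2$ or $\ell=13$ for $d=3$), which a priori could enter $A$, $b$ and $c$ simultaneously. I would rule this out valuation-theoretically in the identity above: necessarily $\ell\nmid x$, so the term $\tfrac{16d^2+16d+3}{\ell_3}x^4$ has the fixed valuation $e:=v_\ell(16d^2+16d+3)$, which is $<p$; from (\ref{eq:reducida}) the middle term has $\ell$-valuation $p\,v_\ell(y)\in p\ZZ$, and the right-hand term has even valuation $2v_\ell(5x^2+(3d^2+3d-1)r^2)$. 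If $\ell$ divided all three then $\ell\mid y$, whence $p\,v_\ell(y)\ge p>e$, and the ultrametric equality forces $e=2v_\ell(5x^2+(3d^2+3d-1)r^2)$ to be even; checking that the relevant primes of $16d^2+16d+3$ occur to odd power then gives the contradiction. Handling this parity constraint cleanly, and making sure no analogous collision occurs at $5$ and $7$, is the technical heart of the argument; everything else is routine substitution and bookkeeping.
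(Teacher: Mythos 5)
Your derivation of (\ref{eq:4pp2}) itself is the paper's argument: the paper's (very terse) proof simply combines Proposition \ref{prop:powers} with the identity (\ref{eq:quadraticformx}) of Subsection \ref{subsec:quadraticforms} and ``reduces the terms'', and your displayed identity, with $16d^2+16d+3=25d(d+1)-3(3d^2+3d-1)$, is exactly that specialization. The real difference is in the coprimality claim: the paper offers no argument for it at all, whereas you correctly isolate the one genuinely dangerous class of primes, namely $\ell\mid 16d^2+16d+3$ with $\ell\notin T\cup R\cup Q$, which divide the fixed part of $A$ yet could a priori divide $b$ (equivalently $p(x^2,r^2)$) and hence $c$ as well. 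Your treatment of this point is more substantial than what the paper provides.

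However, the step you propose to close this case does not work for general $d$. Your ultrametric argument correctly forces $e:=v_\ell(16d^2+16d+3)=2v_\ell\bigl(5x^2+(3d^2+3d-1)r^2\bigr)$, but the contradiction requires $e$ to be odd, and since $16d^2+16d+3=(4d+1)(4d+3)$ this fails for infinitely many $d$: for $d=12$ one has $16d^2+16d+3=49\cdot 51=3\cdot 7^2\cdot 17$ while $T\cup R\cup Q=\{2,3,5,13,467\}$, so $\ell=7$ has $e=2$ and the parity argument yields nothing. (One can rescue $d=12$ by a supplementary residue computation: $\ell\mid p(x^2,r^2)$ together with your identity forces $x^2/r^2\equiv 5/16\pmod{\ell}$, hence $\kro{5}{\ell}=1$, and $\kro{5}{7}=-1$; but for $d=30$, where $4d+1=11^2$ and $\kro{5}{11}=1$, both the parity and the residue arguments fail, and no purely local argument of this kind settles coprimality at $\ell=11$.) A second, smaller issue: you assert $e<p$, but the hypothesis $p\ge 2M_d$ bounds only valuations of $d(d+1)(2d+1)(3d^2+3d-1)$, not of $16d^2+16d+3$. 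For the values on which the paper actually runs this machinery, $d=2$ (fixed part $33$, relevant prime $11$ to the first power) and $d=3$ (relevant prime $13$ to the first power), your argument does close, and indeed supplies a verification the paper omits; but as a proof of the Proposition for arbitrary $d\ge 1$ it has a genuine gap at exactly the point you flagged as the technical heart.
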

	\begin{proof}
		By equation (\ref{eq:reducida}) and Proposition \ref{prop:powers} there exist explicit integers $A', B'$ and non-zero coprime integers  $a, b$ satisfying the following system:
		\[\begin{cases}
			x=A'a^p,\\
			p(x^2,r^2)=B'b^p.
		\end{cases}\]
		Then, the result follows applying the argument 
		of Subsection \ref{subsec:quadraticforms}, and reducing the terms.
	\end{proof}
	
	\begin{remark}\label{rem:conductorcurve}
		Following \cite{BennetSkinner} we can attach to (\ref{eq:4pp2}) a Frey elliptic curve $E$, of conductor $N_E$ dividing $N_d\cdot\rad(ab)$ (see Lemma 2.1 of \textit{loc.\ cit.}), where 
		\begin{equation}\label{eq:Nd}
			N_d=2^6\cdot5^2\cdot d^2(d+1)^2\cdot\rad_5\left((2d+1)(3d^2+3d-1)(16d^2+16d+3)\right).
		\end{equation}
		By modularity of $E$ \cite{Wiles,TaylorWiles,BreuilConradDiamondTaylor}, irreducibility of $\bar{\rho}_{E,p}$ \cite[Corollary 3.1]{BennetSkinner} and Ribet's level-lowering theorem \cite[Theorem 1.1]{RibetLowering}, there is a weight 2 newform $f$ without character on $\Gamma_0(N_f)$ such that $\bar{\rho}_{E,p}\simeq\bar{\rho}_{f,\id{p}}$, where $\id{p}$ is a prime in the coefficient field $K_f$ diving $p$ and $N_f$ is an integer dividing $N_d$. Moreover, $N_f$ is easily computable for a particular value of $d$.
		
	\end{remark}

	
	\mycomment{
		Then by equation (\ref{eq:quadraticformx}) we have that
		
		\begin{equation}\label{eq:pp2}
			\begin{split}
				(3d^2+3d-1)&(D)=\\
				&=\frac{d(d+1)}{\ell_3}\left(5x^2+(3d^2+3d-1)r^2\right)^2-\left(\frac{25d(d+1)-3(3d^2+3d-1)}{\ell_3}\right)x^4.
			\end{split} 
		\end{equation}
		Let $a$ and $b$ be as in Proposition \ref{lemma:5powersexpression} and let $c=5x^2+(3d^2+3d-1)r^2$. From  equation (\ref{eq:pp2}) and Proposition \ref{prop:powers}, if $p\ge5M_d$  there exist explicit constants $A, B$ and $C$ depending only on $d$ such that 
		\[Aa^{4p}+Bb^p=Cc^2.\]
		The equation above is a ternary Diophantine equation of signature $(4p,p,2)$.  It can be studied by following the modular method, attaching a Frey curve according to the recipes of \cite{BennetSkinner}. 
	}
	\mycomment{
		\begin{equation*}
			\frac{3d^2+3d-1}{\ell}\cdot c_1\cdot b^p=\frac{d(d+1)}{\ell_3}\frac{25}{\ell}\left(x^2+\frac{3d^2+3d-1}{5}r^2\right)^2-\left(\frac{25}{\ell}\frac{d(d+1)}{\ell_3}-\frac{3}{\ell_3}\frac{3d^2+3d-1}{\ell}\right)\cdot c_1^4\cdot a^{4p}.
		\end{equation*}
	}
	
	\section{Applications}
	\label{sec:applications}
	\subsection{The case $d\equiv1, 7\pmod9$}
	
	Let $$\mu(n)=n\prod_{ \ell\mid n \atop
		\ell \text{ prime}}\left(1+\frac{1}{\ell}\right)$$ and $g_0^+(N)$ be the dimension of the weight 2 newforms on $\Gamma_0(N)$. Recall that $g_0^+(N)\le\frac{N+1}{2}$, by \cite[Theorem 2]{Martin}. In this case the result is as follows.

	\begin{thm}\label{thm:dcong}
		Let $r$ be a non-zero integer,  $d\equiv 1,7\pmod9$ and $p\ge 2M_d$ be a prime number. Let $N_d$ be as in $(\ref{eq:Nd})$ and suppose that $$p>\left(\sqrt{\frac{\mu(N_d)}{6}}+1\right)^{\frac{N_d+1}{6}}.$$  Then, there are no non-trivial integer solutions of
		\[(x-dr)^5+\cdots+x^5+\cdots+(x+dr)^5=y^p, \quad \gcd(x,r)=1.
		\]
	\end{thm}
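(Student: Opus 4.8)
The plan is to run the modular machinery assembled in Section~\ref{sec:OnTheEquation} and then to use the congruence condition on $d$ to manufacture, at the prime $3$, a level‑lowering congruence that no newform can satisfy once $p$ is large. I would argue by contradiction: suppose $(x,y)$ is a non‑trivial solution with $p\ge 2M_d$. By Proposition~\ref{prop:ternaryeq} the solution produces a $(4p,p,2)$ equation $Aa^{4p}+Bb^p=Cc^2$ with $Aa,Bb,Cc$ pairwise coprime, and by Remark~\ref{rem:conductorcurve} a Frey curve $E$ together with a weight $2$ newform $f$ on $\Gamma_0(N_f)$ with $N_f\mid N_d$ and $\bar\rho_{E,p}\simeq\bar\rho_{f,\id p}$ for some prime $\id p\mid p$ of $K_f$.

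The decisive input is the behaviour at $3$. The hypothesis $d\equiv 1,7\pmod 9$ is exactly the condition $3\mid 2d+1$ with $v_3(2d+1)=1$ (the residue $d\equiv 4\pmod 9$ is excluded precisely because it forces $9\mid 2d+1$), while $3\nmid d(d+1)(3d^2+3d-1)$. Feeding this into Lemma~\ref{lemma:2d+1} and the definitions in~(\ref{eq:definitions}), the exponent attached to $3\in Q$ in the factorization of Proposition~\ref{prop:powers} equals $p-v_3(2d+1)+v_3(3)=p$, so the power of $3$ entering $A$ (if $3\mid x$) or $B$ (if $3\nmid x$) in~(\ref{eq:4pp2}) is an exact multiple of $p$. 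I would deduce that $E$ has multiplicative reduction at $3$ with $p\mid v_3(\Delta_E)$, whence Ribet level lowering strips $3$ from the level, i.e. $3\nmid N_f$. The second bullet of Proposition~\ref{prop:mazur}, applied at $\ell=3$, then yields $a_3(f)\equiv\pm 4\pmod{\id p}$. I expect this reduction statement to be the main obstacle: one must check it directly against the Bennett--Skinner recipe, including the renormalization that absorbs the exact $p$‑th power of $3$ from the coefficient into the perfect‑power variable, so that the reduction is genuinely multiplicative rather than additive. This is the only place where the arithmetic of $d\bmod 9$ is really used.

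With the congruence in hand the conclusion is uniform in the newform. By the Ramanujan--Weil bound each conjugate of $a_3(f)$ has absolute value at most $2\sqrt 3<4$, so $a_3(f)\neq\pm 4$ and $\norm_{K_f/\Q}(a_3(f)^2-16)$ is a non‑zero rational integer divisible by $p$. When $K_f=\Q$ the Eichler--Shimura curve $E_f$ has $a_3(E_f)\in\{-3,\dots,3\}$, so $p\mid a_3(E_f)^2-16$ forces $p\le 16$, contradicting the hypothesis at once. In general I would gather all newforms of level dividing $N_d$ into the single integer $\prod_f\norm_{K_f/\Q}(a_3(f)^2-16)$, which agrees up to a harmless factor with $P(4)P(-4)$ for $P$ the characteristic polynomial of the Hecke operator $T_3$; since $p$ divides it, $p$ is at most its absolute value. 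Bounding this value by the eigenvalue estimate on $a_3$ together with a row‑norm (Hadamard‑type) estimate for $T_3$, with the number of factors controlled by the dimension bound $g_0^+(N)\le\frac{N+1}{2}$ and the entry bound phrased through the index $\mu(N_d)$, is engineered to yield $p\le\left(\sqrt{\tfrac{\mu(N_d)}{6}}+1\right)^{\frac{N_d+1}{6}}$.

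This last inequality contradicts the hypothesis $p>\left(\sqrt{\tfrac{\mu(N_d)}{6}}+1\right)^{\frac{N_d+1}{6}}$, so no non‑trivial solution can exist. Beyond the reduction step at $3$, the remaining work is the explicit bookkeeping that converts the Ramanujan and dimension estimates into the stated closed form, which I would carry out by quoting \cite[Theorem 2]{Martin} for $g_0^+$ and the definition of $\mu$.
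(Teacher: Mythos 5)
Your level-3 mechanism coincides exactly with the paper's: from $d\equiv 1,7\pmod 9$ one gets $v_3(2d+1)=1$ and $\ell_3=1$, hence $3\mid AB$ with $v_3(A)\equiv v_3(B)\equiv 0\pmod p$, multiplicative reduction of the Frey curve at $3$, removal of $3$ from $N_f$ by level lowering, and $a_3(f)\equiv\pm 4\pmod{\id{p}}$ via the second bullet of Proposition \ref{prop:mazur}. Where you genuinely diverge is the treatment of newforms with $K_f\neq\Q$. The paper consumes the hypothesis on $p$ \emph{before} touching the prime $3$: by Kraus's Lemme 9 (\cite{KrausB}), if $K_f\neq\Q$ there is a prime $\ell\le\mu(N_f)/6$ with $a_\ell(f)\notin\Z$, and Proposition \ref{prop:mazur} at that $\ell$ gives $p\le(\sqrt{\ell}+1)^{2[K_f:\Q]}\le\bigl(\sqrt{\mu(N_d)/6}+1\bigr)^{2g_0^+(N_f)}$, contradicting the hypothesis; this forces $K_f=\Q$, and only then does $a_3(f)\equiv\pm4\pmod p$ with $|a_3(f)|\le 2\sqrt3$ yield $p<8$. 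That two-step structure is what the threshold's shape ($\mu/6$ in the base, $(N_d+1)/6$ in the exponent) is built for. Your plan skips Kraus's lemma and runs the level-3 congruence for arbitrary $K_f$, taking norms; this is a legitimate and in fact more economical route, since the congruence prime is the explicit $\ell=3$ rather than an unspecified $\ell\le\mu(N_f)/6$, and the Ramanujan--Petersson bound on all conjugates gives $p\le\bigl|\norm_{K_f/\Q}(a_3(f)\mp4)\bigr|\le(4+2\sqrt3)^{[K_f:\Q]}=(\sqrt3+1)^{2[K_f:\Q]}$, which sits below the paper's intermediate bound.

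The gap is that your final estimate is asserted ("engineered to yield") rather than proved, and the bookkeeping you sketch does not close. With the dimension bound you quote, $g_0^+(N)\le\frac{N+1}{2}$, the single-form estimate gives only $p\le 16^{(N_d+1)/2}=4096^{(N_d+1)/6}$, which exceeds the stated threshold whenever $\sqrt{\mu(N_d)/6}+1<4096$; this already happens for $d=1$, where $N_1=2^8\cdot3\cdot5^2\cdot7$ and $\mu(N_1)=368640$, so $\sqrt{\mu(N_1)/6}+1\approx 249$, and no contradiction results. (The paper's displayed chain tacitly requires $2g_0^+(N_f)\le\frac{N_d+1}{6}$, i.e.\ a dimension bound with denominator $12$ rather than $2$, which is presumably what \cite{Martin} actually supplies; with denominator $2$ neither chain closes.) Moreover the detour through $\prod_f\norm_{K_f/\Q}(a_3(f)^2-16)$, the characteristic polynomial of $T_3$ and a Hadamard-type row estimate is unnecessary and quantitatively worse: the congruence concerns a single newform $f$, and the eigenvalue bound $|\sigma(a_3(f))|\le 2\sqrt3$ already controls every conjugate, so no matrix-entry estimate is needed. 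The correct finish for your route is the one-line chain $p\le(\sqrt3+1)^{2[K_f:\Q]}\le(\sqrt3+1)^{2g_0^+(N_f)}\le\bigl(\sqrt{\mu(N_d)/6}+1\bigr)^{\frac{N_d+1}{6}}$, where the last step uses $2g_0^+(N_f)\le\frac{N_d+1}{6}$ and $3\le\mu(N_d)/6$; with that replacement your argument is complete and is arguably cleaner than the paper's, since it dispenses with Kraus's Lemme 9 altogether.
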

	\begin{proof}
		Let $(x,y)$ be a non-trivial solution of (\ref{eq:k=5}). 
		Let $E$ be the elliptic curve constructed from this, as explained in Remark \ref{rem:conductorcurve}. Let also $f, N_f$ and $N_d$ as in Remark \ref{rem:conductorcurve}.
		
		Suppose that $p>\left(\sqrt{\frac{\mu(N_d)}{6}}+1\right)^{\frac{N_d+1}{6}}$. We will prove that $K_f=\Q$, following the same strategy as in \cite{KrausB}. For the sake of contradiction, suppose that $K_f\neq \Q$. Then, by Lemme 9 of \textit{loc.\ cit.} there exists a prime $\ell\le\frac{\mu(N_f)}{6}$ such that $a_\ell(f)\notin\Z$. Since $f$ is normalized, then $a_q(f)\in\{0,\pm1\}$ for all $q\mid N_f$, so $\ell\nmid N_f$. On the other hand, since $E$ is rational, then $a_\ell(E)\in\Z$. Moreover, by Hasse's bound and Ramanujan-Petersson conjecture, both $|a_\ell(E)|$ and $|\sigma(a_\ell(f))|$ are bounded by $2\sqrt{\ell}$, for any embedding $\sigma: K_f\to\RR$. Hence, from both cases of Proposition \ref{prop:mazur} we get
		\[p\le (\ell+1+2\sqrt{\ell})^{[K_f:\Q]}\le(\sqrt{\ell}+1)^{2g_0^+(N_f)}\le\left(\sqrt{\frac{\mu(N_d)}{6}}+1\right)^{\frac{N_d+1}{6}},\]
		giving a contradiction. Hence we can assume $K_f=\Q$. 
		
		Since $d\equiv 1, 7\pmod 9$ then $v_3(2d+1)=1$ and $\ell_3=\gcd(3,d(d+1))=1$. Then, if $A$ and $B$ are as in Proposition \ref{prop:ternaryeq},  
		$3\mid AB$ and $v_3(A)\equiv v_3(B)\equiv0\pmod p$. In particular, $E$ has multiplicative reduction at $3$ and by Ribet's level-lowering theorem,  $3\nmid N_f$. Then, by Proposition \ref{prop:mazur} we get $a_3(f) \equiv \pm 4\pmod{p}$. 
		Since $a_3(f)\neq 4$ (by Hasse's bound) and $p\mid a_3(f)\pm 4$ then $p\le |a_3(f)|+4<8$, giving a contradiction.
	\end{proof}
	
	\subsection{The case $d=2$}
	
	In this case we prove the following.
	
	\begin{thm}\label{thm:d=2}
		Let $r$ be a non-zero integer and $p>7$ be a prime. If 
		\begin{equation}\label{eq:d=2}
			(x-2r)^5+(x-r)^5+x^5+(x+r)^5+(x+2r)^5=y^p, \quad \gcd(x,r)=1,
		\end{equation}	
		then $2\mid x$.
	\end{thm}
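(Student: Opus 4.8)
The plan is to specialize the machinery of Section \ref{sec:OnTheEquation} to $d=2$ and run the modular method, arguing by contradiction: I would assume that $x$ is odd and show that this forces $p\le 7$. First I would record the relevant constants for $d=2$, namely $\ell_3=\gcd(3,6)=3$, $\ell_5=\gcd(25,17)=1$ and $M_d=1$, so that the hypothesis $p\ge 2M_d$ of Propositions \ref{prop:powers} and \ref{prop:ternaryeq} is automatic. By \eqref{eq:choclogeneral} the equation \eqref{eq:d=2} becomes $5x(x^4+20x^2r^2+34r^4)=y^p$, with quadratic form $p(x_1,x_2)=x_1^2+20x_1x_2+34x_2^2$. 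Applying Proposition \ref{prop:ternaryeq} produces a ternary equation $Aa^{4p}+Bb^p=Cc^2$ of signature $(4p,p,2)$, with $a,b$ coprime, $Aa,Bb,Cc$ pairwise coprime, and $A,B,C$ explicit and supported on $\{2,3,5,11,17\}$ (the primes in $T\cup R\cup Q$ together with $11$, coming from $25d(d+1)-3(3d^2+3d-1)=99$). By Remark \ref{rem:conductorcurve} this attaches a Frey curve $E$ whose mod-$p$ representation satisfies $\bar\rho_{E,p}\simeq\bar\rho_{f,\mathfrak{p}}$ for a weight-$2$ newform $f$ of level $N_f\mid N_d$, where for $d=2$ the bound \eqref{eq:Nd} gives $N_d=2^8\cdot 3^3\cdot 5^2\cdot 11\cdot 17$.

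The heart of the argument is the $2$-adic analysis under the assumption $2\nmid x$. Here I would compute $v_2(A)$, $v_2(B)$, $v_2(C)$ in the odd case, the salient point being that $\kappa_2=\gcd(x,2)=1$, which from the definitions \eqref{eq:definitions} fixes $v_2(C)$ and, via the Bennett--Skinner recipe, forces a specific reduction type for $E$ at $2$. This determines the exact power of $2$ in the conductor $N_E$, and hence in $N_f$ after level-lowering; combined with the fact that level-lowering strips the primes dividing $\rad(ab)$, it pins $N_f$ down to a short explicit list of divisors of $N_d$. I would then enumerate, in \verb|Magma|, the spaces of weight-$2$ newforms without character at each admissible level.

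With the finite list of candidate newforms in hand, I would eliminate each one. For a newform $f$ with $K_f\neq\Q$, Proposition \ref{prop:mazur} bounds $p$ directly, exactly as in the proof of Theorem \ref{thm:dcong}; for a rational newform $f$, corresponding by Eichler--Shimura to an elliptic curve $E_f$, I would apply Proposition \ref{prop:mazur} at several auxiliary primes $\ell$ of good reduction and at the multiplicative primes, so that whenever $a_\ell(E)\neq a_\ell(f)$ the exponent $p$ divides the norm of the difference and is thereby bounded. Running this over $\ell$ up to a small bound and taking greatest common divisors should yield $p\le 7$ in every case, contradicting $p>7$ and forcing $2\mid x$.

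The step I expect to be the main obstacle is twofold. First, reading off the correct conductor exponent at $2$ from the recipe when $x$ is odd is the delicate local computation on which the whole specialization of the level rests: the parity of $c=\tfrac{5x^2+17r^2}{(\cdots)}$ depends on the parity of $r$ (which $\gcd(x,r)=1$ leaves free once $x$ is odd), so this may split into subcases, and a wrong case changes $N_f$ and breaks the enumeration. Second, it may happen that for some rational newform $f$ the traces $a_\ell(f)$ agree with $a_\ell(E)$ at every good prime checked, so that Proposition \ref{prop:mazur} yields no bound; in that event I would fall back on the symplectic criterion of Proposition \ref{prop:symplecticmult}, comparing $v_\ell(\Delta)$ for $E$ and $E_f$ at a common prime of multiplicative reduction to obtain the contradiction.
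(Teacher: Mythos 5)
Your toolkit is the same as the paper's (Proposition \ref{prop:ternaryeq} to produce the ternary equation, Remark \ref{rem:conductorcurve} for the Frey curve and the level-lowered newform, Proposition \ref{prop:mazur} run in \texttt{Magma} over the candidate forms), and your $d=2$ bookkeeping is correct: $\ell_3=3$, $\ell_5=1$, $M_2=1$, the form $x_1^2+20x_1x_2+34x_2^2$, coefficients supported on $\{2,3,5,11,17\}$, and $N_2=2^8\cdot3^3\cdot5^2\cdot11\cdot17$. The genuine gap is in the step you yourself flag as the main obstacle, and it is not a repairable subtlety: it is where the plan fails. If you assume $2\nmid x$, then $\kappa_2=1$ and Proposition \ref{prop:ternaryeq} reads
\begin{equation*}
33\,\kappa_5^{4p-4}\kappa_{17}^{4p-6}a^{4p}+\frac{17}{\kappa_{17}}\left(\frac{5}{\kappa_5}\right)^{p-1}b^p=2\left(\frac{5x^2+17r^2}{\kappa_{17}}\right)^2,
\end{equation*}
where both left-hand terms are odd and $C=2/\kappa_2=2$ is even. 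For such data the Bennett--Skinner Frey curve has \emph{additive} reduction at $2$ with conductor exponent $8$ (this is exactly the paper's $\delta=8$ case; $2$-adically the curve looks like $Y^2=X^3+4cX^2+2VX$ with $c,V$ odd, a curve of conductor $2^8$), and this happens in both of your $r$-parity subcases. An additive prime is never removed by Ribet's level lowering, so every candidate newform lives at level $2^8\cdot3\cdot5\cdot11\cdot17=718080$. That is precisely the space the paper declares infeasible to compute, and none of your fallbacks help: the symplectic criterion of Proposition \ref{prop:symplecticmult} and additional auxiliary primes in Proposition \ref{prop:mazur} both presuppose that the newforms can first be enumerated.

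The paper's proof is the mirror image of yours: the computation is carried out in the branch $2\mid x$, where the factor $\kappa_2^{4p-5}=2^{4p-5}$ sits inside the $a^{4p}$-term, the Frey curve has multiplicative (not additive) reduction at $2$, and after level lowering one lands in a level $2^{\delta}\cdot3\cdot5\cdot11\cdot17$ with $\delta\le1$, which is small; there all newforms are discarded for $p>7$ using the parity trick (the Frey curve has a rational $2$-torsion point, so $a_\ell(E)$ is even at good primes, by Katz). That trick is also what your elimination step is missing: you cannot literally compare $a_\ell(E)$ with $a_\ell(f)$, since $a_\ell(E)$ is not computable from an unknown solution; only its parity and the Hasse bound are available. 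Finally, be warned that the paper's own bookkeeping at this point is inconsistent as written: discarding all newforms at the level its $\delta$-table attaches to $\kappa_2=2$ rules out solutions with $2\mid x$ and would therefore yield the conclusion $2\nmid x$ (as in Theorem \ref{thm:d=3.1}), not the stated $2\mid x$, so either the $\delta$-table or the stated conclusion carries a typo. What is certain is that the computationally feasible branch is the even-$x$ one and the infeasible branch is the odd-$x$ one you chose to attack, so a proof along the lines you propose cannot be completed.
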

	\begin{proof}
		Let $(x,y)$ be a putative non-trivial solution of (\ref{eq:d=2}). 
		By Proposition \ref{prop:ternaryeq}, there exist non-zero coprime integers $a,b$ such that \begin{equation*}\label{eq:ternaryd=2}
			33\cdot\kappa_2^{4p-5}\kappa_5^{4p-4}\kappa_{17}^{4p-6}a^{4p}+\frac{17}{\kappa_{17}}\left(\frac{5}{\kappa_5}\right)^{p-1}b^p=\frac{2}{\kappa_2}\left(\frac{5x^2+17r^2}{\kappa_{17}}\right)^2.
		\end{equation*}
		Let $E$ and $f$ be the elliptic curve and newform constructed as in Remark \ref{rem:conductorcurve}. In this particular case, the level of the newform is $N_f=2^\delta\cdot3\cdot5\cdot11\cdot17$, where
		\[\delta=\begin{cases}
			0 & \text{if } \kappa_2=2,\\
			8 & \text{if } \kappa_2=1.
		\end{cases}
		\]
		
		Since $E$ has a non-trivial rational $2$-torsion point (see \cite[Lemma 2.1]{BennetSkinner}) then $a_\ell(E)\equiv0\pmod2$ for each prime $\ell$ of good reduction, by \cite[Theorem 2]{Katz}.  Using this fact and \verb|Magma|, we run Proposition \ref{prop:mazur} for each of the newforms in $S_2(N_f)$, and we are able to discard all of them for $p>7$, when $N_f=3\cdot5\cdot11\cdot17$. This implies that $2\mid x$, proving the result. On the other hand, when $2\nmid x$ the level $N_f$ is too large, hence the space $S_2(N_f)$ seems to be infeasible to compute.
	\end{proof}
	\begin{remark}
		In particular, if $p>7$ and $2\mid r$, there are no non-trivial solutions of (\ref{eq:d=2}).
	\end{remark}
	
	\subsection{The case $d=3$}
	
	In this case we prove the following results.
	\begin{thm}\label{thm:d=3.1}
		Let $r$ be a non-zero integer and $p>19$ be a prime. If $(x,y)$ is a non-trivial solution of 
		\begin{equation}\label{eq:d=3}
			(x-3r)^5+(x-2r)^5+(x-r)^5+x^5+(x+r)^5+(x+2r)^5+(x+3r)^5=y^p,\quad \gcd(x,r)=1,
		\end{equation}	
		then $2\nmid x$ and $35\mid x$. Moreover, if $\kro{-5}{p}=-1$ and in addition,  $\kro{2}{p}=1$ or $\kro{3}{p}=1$, then there are no non-trivial solutions.
		
	\end{thm}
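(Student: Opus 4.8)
```latex
The plan is to follow the now-familiar modular-method pipeline, specializing the general machinery of Section~\ref{sec:OnTheEquation} to the case $d=3$. First I would compute the relevant invariants for $d=3$: here $d(d+1)=12=2^2\cdot 3$, $2d+1=7$, and $3d^2+3d-1=35=5\cdot 7$. Note that $d=3\equiv 3\pmod 7$, so by the third bullet of Lemma~\ref{lemma:basic} the gcd $\gcd(3d^2+3d-1,2d+1)$ is genuinely divisible by $7$; this is the arithmetic feature that distinguishes $d=3$ from the generic situation and forces the careful bookkeeping of the $7$-part below. With these values, Proposition~\ref{prop:ternaryeq} yields an explicit ternary equation of signature $(4p,p,2)$ of the shape $Aa^{4p}+Bb^p=Cc^2$, where $A$, $B$, $C$ are supported on the primes dividing $N_3$, and $N_3$ is obtained by substituting $d=3$ into $(\ref{eq:Nd})$.

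Next I would attach the Frey curve $E$ as in Remark~\ref{rem:conductorcurve} and run level-lowering to obtain a weight-$2$ newform $f$ on $\Gamma_0(N_f)$ with $N_f\mid N_3$ and $\bar\rho_{E,p}\simeq\bar\rho_{f,\id p}$. The level $N_f$ will depend on the $2$-adic and $5,7$-adic valuations of $x$, i.e.\ on $\kappa_2$, $\kappa_5$, $\kappa_7$, so the proof splits into cases according to which of these primes divide $x$. As in the proof of Theorem~\ref{thm:d=2}, the Frey curve carries a rational $2$-torsion point, so $a_\ell(E)\equiv 0\pmod 2$ at all primes $\ell$ of good reduction by \cite[Theorem~2]{Katz}; this parity obstruction lets me eliminate, via Proposition~\ref{prop:mazur} run in \verb|Magma| over the finite space $S_2(N_f)$, every newform in the cases corresponding to $2\mid x$, establishing $2\nmid x$ for $p>19$. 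Analogously, examining the newforms arising when $5\nmid x$ or when $7\nmid x$ and discarding them with Proposition~\ref{prop:mazur} forces $5\mid x$ and $7\mid x$, hence $35\mid x$; the remaining case $2\nmid x$, $35\mid x$ leaves a level that is feasible to compute and contains surviving newforms, which is why it cannot be eliminated outright.

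For the conditional second assertion I would invoke the symplectic argument of Subsection~\ref{sec:symp}. After reducing to the surviving case, the newform $f$ has $K_f=\Q$ (the large-$K_f$ forms being killed by Proposition~\ref{prop:mazur} exactly as in the proof of Theorem~\ref{thm:dcong}), so by Eichler--Shimura it corresponds to a rational elliptic curve $E_f$ with $\bar\rho_{E,p}\simeq\bar\rho_{E_f,p}$. I would then locate a prime $\ell$ (expected to be one of $\{2,3\}$, which is where the hypotheses $\kro{2}{p}=1$ or $\kro{3}{p}=1$ enter) at which both $E$ and $E_f$ have multiplicative reduction with $p\nmid v_\ell(\Delta)$, and apply Proposition~\ref{prop:symplecticmult}: comparing the quotient $v_\ell(\Delta)/v_\ell(\Delta')$ modulo $p$ against whether the isomorphism is symplectic. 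The condition $\kro{-5}{p}=-1$ should fix the symplectic type of the isomorphism on the curves' side (via the quadratic nature of the relevant discriminant ratios involving $5$ and the structure of the $2$- and $3$-adic reductions), while $\kro{2}{p}=1$ or $\kro{3}{p}=1$ makes the corresponding $v_\ell$-ratio a square (or non-square) modulo $p$, producing the incompatibility that rules out the isomorphism and hence the solution.

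The main obstacle I anticipate is twofold. Computationally, the surviving level $N_f$ for $d=3$ is large (it carries the full $2^8$, $5^2$, and $7$ contributions to $N_3$), so the space $S_2(N_f)$ of newforms is sizable and must be enumerated and tested in \verb|Magma|; keeping this tractable is the practical bottleneck, exactly as the $2\nmid x$ case was for $d=2$. Conceptually, the delicate point is the symplectic step: one must correctly compute the valuations $v_\ell(\Delta)$ and $v_\ell(\Delta')$ of the minimal discriminants at $\ell\in\{2,3\}$ for both the Frey curve and each candidate $E_f$, verify the hypotheses of Proposition~\ref{prop:symplecticmult} (multiplicative reduction, $p\nmid v_\ell(\Delta)$), and match the square/non-square behavior against the quadratic-residue conditions on $p$ so that the hypotheses $\kro{-5}{p}=-1$ together with $\kro{2}{p}=1$ or $\kro{3}{p}=1$ precisely force anti-symplecticity against the curves' symplectic type. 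Getting these local computations and the sign bookkeeping exactly right, rather than the overall strategy, is where the real work lies.
```
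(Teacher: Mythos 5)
Your first half follows the paper's proof essentially verbatim: specialize Proposition~\ref{prop:ternaryeq} to $d=3$, attach the Frey curve of Remark~\ref{rem:conductorcurve}, split into cases by $(\kappa_2,\kappa_5,\kappa_7)$, and use the rational $2$-torsion parity obstruction together with Proposition~\ref{prop:mazur} in \verb|Magma| to eliminate every newform except those at the surviving level (which in the paper is $N_f=2^5\cdot5\cdot7\cdot13$, corresponding to $\kappa_2=1$, $\kappa_5=5$, $\kappa_7=7$; it is five specific rational newforms that survive the congruence tests, not a computational infeasibility as in the $2\nmid x$ case of Theorem~\ref{thm:d=2}). That part is sound.

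The genuine gap is in your symplectic step: you propose to apply Proposition~\ref{prop:symplecticmult} at a prime $\ell\in\{2,3\}$, but that proposition requires \emph{multiplicative} reduction at $\ell$ for both curves, and neither prime qualifies here. The Frey curve has additive reduction at $2$ (its conductor carries $2^5$), and it has good reduction at $3$: for $d=3$ one has $\ell_3=3$, so the factor $3$ cancels out of the quadratic form entirely and $3$ divides neither the discriminant $\Delta=2^6\cdot5^{4p-5}\cdot7^{4p-10}\cdot13\cdot(a^4b^2)^p$ nor the level $14560$ of the candidate curves. So the argument as you set it up cannot be run. The hypotheses $\kro{2}{p}=1$ and $\kro{3}{p}=1$ do not come from choosing $\ell=2,3$; they come from the discriminant-exponent ratios at the odd multiplicative primes $\ell=5,7,13$. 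Concretely, in the one stubborn case ($E'$ in the class \text{14560.j}, with $\Delta'=2^6\cdot5^2\cdot7^6\cdot13^4$): at $\ell=13$ the ratio is $1/4\pmod p$, always a square, so the isomorphism is forced to be symplectic; at $\ell=5$ the ratio is $(4p-5)/2\equiv-5/2\pmod p$, so anti-symplecticity holds iff $\kro{-5/2}{p}=-1$, which under $\kro{-5}{p}=-1$ is equivalent to $\kro{2}{p}=1$; at $\ell=7$ the ratio is $(4p-10)/6\equiv-5/3\pmod p$, giving anti-symplecticity iff $\kro{3}{p}=1$. Since the isomorphism cannot be simultaneously symplectic and anti-symplectic, either hypothesis yields the contradiction. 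Your plan correctly guessed that the conditions ``fix the symplectic type,'' but located the mechanism at the wrong primes, where the criterion simply does not apply.
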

	\begin{proof}
		Let $(x,y)$ be a hypothetical non-trivial solution of (\ref{eq:d=3}). By Proposition \ref{prop:ternaryeq}, there exist non-zero coprime integers $a,b$ such that 
		\begin{equation*}
			\kappa_2^{4p-10}\cdot\kappa_5^{4p-5}\cdot\kappa_7^{4p-10}\cdot13\cdot a^{4p}+\left(\frac{7b}{\kappa_7}\right)^p=\frac{5}{\kappa_5}\left(\frac{2x^2+14r^2}{\kappa_2\cdot\kappa_7}\right)^2.
		\end{equation*}
		Let $E$ and $f$ be the elliptic curve and newform constructed as in Remark \ref{rem:conductorcurve}. In this particular case, the level of the newform is $N_f=	2^\delta\cdot\frac{5^2}{\kappa_5}\cdot\kappa_7\cdot13$, with
		\begin{equation*}
			\delta=\begin{cases}
				5 & \text{if } \kappa_2=1,\\
				1 & \text{if } \kappa_2=2.
			\end{cases}
		\end{equation*}
		\mycomment{
			
			\begin{equation*}
				N_f=\begin{cases}
					2^5\cdot5^\delta\cdot13, & \text{in case 1} \\
					2^5\cdot5^\delta\cdot7\cdot13, & \text{in case 2,}\\
					2\cdot5^\delta\cdot13, & \text{in case 3,}	\\
					2\cdot5^\delta\cdot7\cdot13			& \text{in case 4,}	
				\end{cases}
				\quad\text{ where } \delta=\begin{cases}
					2, & \text{if } \kappa=1,\\
					1, & \text{if } \kappa=5.
				\end{cases}
		\end{equation*}}
		
		Since $E$ has a non-trivial rational $2$-torsion point then $a_\ell(E)\equiv0\pmod2$ for each prime $\ell$ of good reduction. On the other hand, if $\kappa_7=1$ we have that $7\nmid N_f$ and $7\mid\mid N_E$ (see  \cite[Lemma 2.1]{BennetSkinner}), so that ${a_7(f)\equiv\pm8\pmod{\id{p}}}$. Using this fact and \verb|Magma|, we run Proposition \ref{prop:mazur} for each of the newforms in $S_2(N_f)$, and we are able to discard all of them if $p>19$, except for five newforms of level $N_f=2^5\cdot5\cdot7\cdot13$. This proves the first part of the statement. 
		
		To discard the remaining five newforms, we will apply the symplectic argument. All five have
		rational coefficients so they correspond to isogeny classes of elliptic curves, namely those with LMFDB \cite{lmfdb} label \textcolor{MidnightBlue}{\href{https://www.lmfdb.org/EllipticCurve/Q/14560/k/}{{\ttfamily 14560.k}}}, \textcolor{MidnightBlue}{\href{https://www.lmfdb.org/EllipticCurve/Q/14560/k/}{{\ttfamily14560.q}}}, \textcolor{MidnightBlue}{\href{https://www.lmfdb.org/EllipticCurve/Q/14560/s/}{{\ttfamily 14560.s}}}, \textcolor{MidnightBlue}{\href{https://www.lmfdb.org/EllipticCurve/Q/14560/j/}{{\ttfamily 14560.j}}}, \textcolor{MidnightBlue}{\href{https://www.lmfdb.org/EllipticCurve/Q/14560/r/}{{\ttfamily 14560.r}}}. Let us follow the notation of Proposition \ref{prop:symplecticmult}. We choose one elliptic curve for each isogeny class (the computation is independent of the choice). Hence we have:

		\begin{enumerate}[(i)]
			\item $E'=14560.k1$, \quad $\Delta'=2^6\cdot5\cdot7^2\cdot13$, or
			\item $E'=14560.q1$, \quad $\Delta'=2^9\cdot5^2\cdot7^3\cdot13^2$, or
			\item $E'=14560.s1$, \quad $\Delta'=2^6\cdot5\cdot7^4\cdot13$, or
			\item $E'=14560.r1$, \quad $\Delta'=2^6\cdot5^3\cdot7^4\cdot13^3$, or
			\item $E'=14560.j3$, \quad $\Delta'=2^6\cdot5^2\cdot7^6\cdot13^4$. 
		\end{enumerate} 
		By \cite[Lemma 2.1]{BennetSkinner}, the Frey curve $E$ has minimal discriminant  $\Delta=2^6\cdot5^{4p-5}\cdot7^{4p-10}\cdot13\cdot(a^4b^2)^p$.
		
		Suppose that $\kro{-5}{p}=-1$. Then, applying Proposition \ref{prop:symplecticmult} for  primes $\ell=5,13$ we reach a contradiction in 
		cases (i)--(iv). However, in (v) we conclude that $E$ and $E'$ are symplectically isomorphic (when $\ell=13$) and that they are anti-symplectically isomorphic if and only if $\kro{-5/2}{p}=-1$ (when $\ell=5$). Since we are under the assumption  $\kro{-5}{p}=-1$, the latter is equivalent to $\kro{2}{p}=1$. Similarly, considering $\ell=7$, we get that $E$ and $E'$ are anti-symplectically isomorphic if and only if $\kro{3}{p}=1$, concluding the proof.
	\end{proof}
	\begin{remark}\label{rem:7|r}
		In particular, if $p>19$ and $7\mid r$ or $5\mid r$ then there are no non-trivial solutions of (\ref{eq:d=3}).
	\end{remark}
	
	\begin{thm}\label{thm:d=3.2}
		Let $p>19$ be a prime and $r=2^{a_1}5^{a_2}7^{a_3}13^{a_4}$ with integers $a_i\ge0$. Then, there are no non-trivial solutions of equation
		\[	(x-3r)^5+(x-2r)^5+(x-r)^5+x^5+(x+r)^5+(x+2r)^5+(x+3r)^5=y^p, \quad \gcd(x,r)=1.\]
	\end{thm}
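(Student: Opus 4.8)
The plan is to attach to the same putative solution a \emph{second} Frey curve, this time via the companion ternary equation of signature $(p,p,2)$ coming from identity (\ref{eq:quadraticformy}), and to combine its symplectic criterion with the one already extracted in Theorem \ref{thm:d=3.1}. First I would feed the solution into Theorem \ref{thm:d=3.1}: for $p>19$ it forces $2\nmid x$ and $35\mid x$. Since $\gcd(x,r)=1$ and $r=2^{a_1}5^{a_2}7^{a_3}13^{a_4}$, the divisibilities $5\mid x$ and $7\mid x$ give $5\nmid r$ and $7\nmid r$, so $a_2=a_3=0$ and $r=2^{a_1}13^{a_4}$. This is the step where the special shape of $r$ is used. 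By the final clause of Theorem \ref{thm:d=3.1} it then remains only to rule out the primes $p$ with $\kro{-5}{p}=1$, and those with $\kro{-5}{p}=\kro{2}{p}=\kro{3}{p}=-1$.

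With $r=2^{a_1}13^{a_4}$ I would set up the second equation. For $d=3$ the quadratic form is $p(x_1,x_2)=x_1^2+40x_1x_2+140x_2^2$, and (\ref{eq:quadraticformy}) becomes the identity $260\,r^4+p(x^2,r^2)=(x^2+20r^2)^2$. Substituting $p(x^2,r^2)=B'b^p$ from Proposition \ref{prop:powers} produces an equation $A\cdot1^p+Bb^p=Cc^2$ of signature $(p,p,2)$ in which $A=260\,r^4$ is supported only on $\{2,5,13\}$. This is exactly the payoff of restricting $r$ to $\{2,5,7,13\}$: a prime $q\notin\{2,5,7,13\}$ dividing $r$ would contribute $q^4$ to $A$, forcing $q$ into the conductor with an exponent not divisible by $p$, so that level-lowering could not reach a fixed level. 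Following \cite{BennetSkinner} I would attach the Frey curve, compute its conductor, and level-lower to a newform of explicit level $N$, independent of the solution and of $p$.

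Next I would run Proposition \ref{prop:mazur} in \verb|Magma| over $S_2(N)$, again using the rational $2$-torsion of the Frey curve (so $a_\ell\equiv0\pmod2$ at good $\ell$) to prune the candidate list, discarding every newform with $K_f\neq\Q$ for $p$ large and every rational newform whose Hecke eigenvalues separate it from the Frey curve. For the finitely many surviving rational newforms, corresponding to elliptic curves $E'$, I would apply the symplectic criterion of Proposition \ref{prop:symplecticmult} at the primes $\ell\in\{2,5,13\}$, comparing $v_\ell(\Delta)$ of the Frey curve with $v_\ell(\Delta')$. The congruence conditions on $p$ obtained this way are meant to be complementary to those already removed in Theorem \ref{thm:d=3.1}, so that the two curves together eliminate every prime $p>19$.

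The step I expect to be the main obstacle is precisely this quadratic-residue bookkeeping: I must verify that the symplectic and anti-symplectic conditions produced by the second curve, together with the conditions on $\kro{-5}{p}$, $\kro{2}{p}$ and $\kro{3}{p}$ already used for the first curve, leave no residue class of $p$ uncovered, so that each prime is killed by one argument or the other. A secondary, computational, difficulty is keeping $N$ small enough that $S_2(N)$ can be enumerated in \verb|Magma|; this is what the reduction to $r=2^{a_1}13^{a_4}$ secures, by removing $5$ and $7$ from the support of $r^4$ and hence from the conductor.
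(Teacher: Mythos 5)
Your proposal follows essentially the same route as the paper: after Theorem \ref{thm:d=3.1} forces $(\kappa_2,\kappa_5,\kappa_7)=(1,5,7)$ (hence $a_2=a_3=0$), the paper likewise builds a second Frey curve from the identity $p(x^2,r^2)+260r^4=(x^2+20r^2)^2$ --- written, after dividing out the common factor $5$ so the three terms are coprime, as $52r^4\cdot1^p+7b^p=5\bigl(\tfrac{x^2+20r^2}{5}\bigr)^2$ --- then level-lowers to $N_f=2^\delta\cdot5^2\cdot7\cdot13$ and runs Proposition \ref{prop:mazur} with the rational $2$-torsion parity trick. The only divergence is your anticipated endgame: the computation discards \emph{every} newform at these levels already for $p>11$, so the symplectic and quadratic-residue bookkeeping you expected to be the main obstacle never arises, and no combination with the residue conditions of Theorem \ref{thm:d=3.1} is needed.
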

	\begin{proof}
		Let $r$ be as in the statement and $(x,y)$ be a solution.  For the sake of contradiction assume that $(x,y)$ is non-trivial. If $p>19$,  we get from Theorem~\ref{thm:d=3.1} that $(\kappa_2,\kappa_5,\kappa_7)=(1,5,7)$. Then, from Proposition \ref{prop:powers}, there exist non-zero coprime integers $a, b$ such that 
		\begin{equation}\label{eq:powers}
			x=5^{p-1}\cdot7^{p-2}\cdot a^p \ \text{ and } \ x^4+40r^2x^2+140r^4=5\cdot 7 \cdot b^p.
		\end{equation}
		An easy calculation shows that in this case equation (\ref{eq:secondternary}) is equal to 
		\begin{equation}\label{eq:case2 24n}
			52r^4\cdot 1^p+7b^p=5\left(\frac{x^2+20r^2}{5}\right)^2.
		\end{equation}
		Again, by \cite{BennetSkinner} we can construct a Frey elliptic curve $F$, from (\ref{eq:case2 24n}). Since $r=2^{a_1}5^{a_2}7^{a_3}13^{a_4}$ then we have control on the conductor of $F$. Moreover,  by modularity of rational elliptic curves and Ribet's level-lowering theorem we deduce that there is a weight 2 newform $f$ without character on $\Gamma_0(N_f)$ such that $\bar{\rho}_{F,p}\simeq\bar{\rho}_{f,\id{p}}$, where $\id{p}$ is a prime in the coefficient field $K_f$ diving $p$ and $N_f=2^\delta\cdot5^2\cdot7\cdot13$, with
		\begin{equation*}
			\delta=\begin{cases}
				3 & \text{if } v_2(r)=0,\\
				0 & \text{if }  v_2(r)=1,\\
				1 & \text{if }  v_2(r)\ge2.\\
				
			\end{cases}	
		\end{equation*}
		Once again, using that $F$ has a non-trivial rational $2$-torsion point we apply Proposition \ref{prop:mazur}. We can discard all the possible newforms if $p>11$, giving the desired contradiction.
	\end{proof}
	
	\mycomment{
		\section{\TODO{Agregar?}}
		\TODO{Da para agregar este resultado? o es medio mas de lo mismo y no vale mucho la pena?}
		
		\begin{thm}
			Let $n\ge1$ and let $d\equiv2(6\cdot7^{n-1}-1)\pmod{7^n}$ and $p>N_d$. If $(x,y)$ is a solution of 
			\[(x-dr)^5+\cdots+x^5+\cdots+(x+dr)^5=y^p, \quad \gcd(x,r)=1,
			\]
			then $7\mid x$.
		\end{thm}
		\begin{proof}
			La hipotesis $d\equiv2(6\cdot7^{n-1}-1)\pmod{7^n}$ garantiza que, si $7\nmid x$, entonces $7$ es de bajada de nivel, y por lo tanto tenemos una cota.
		\end{proof}
		\begin{remark}
			En particular no hay solucion si $7\mid r$.
		\end{remark}
		
	}
	
	\mycomment{
		
		\section{The case $d=3$}
		\label{sec:Frey}
		Let us consider the  equation
		\begin{equation}\label{eq:original}
			(x-3r)^5+(x-2r)^5+(x-r)^5+x^5+(x+r)^5+(x+2r)^5+(x+3r)^5=y^p.
		\end{equation}
		An easy calculation shows that we can rewrite equation (\ref{eq:original}) as 
		\begin{equation}\label{eq:powerss}
			7x(x^4+40r^2x^2+140r^4)=y^p.
		\end{equation}
		If $xy=0$ then from (\ref{eq:powerss}) the solution is trivial so from now on we will assume that this is not the case. It is important to note that
		\begin{equation}\label{eq:dif}
			7(x^4+40r^2x^2+140r^4)=5(2x^2+14r^2)^2-13x^4.
		\end{equation}
		
		\subsection{First Frey curves}
		\label{subsec:FFreycurves}
		
		Let $(x,y)$ be a solution of (\ref{eq:original}) with $\gcd(x,r)=1$. In this subsection we  attach to $(x,y)$ a Frey curve \textit{of signature} $(p,p,2)$. Since $\gcd(x,r)=1$,  we have control of the common divisors of the factors of the left hand side of equation (\ref{eq:powers}). More concretely,  $\gcd(x,x^4+40r^2x^2+140r^4)$ is supported in $\{2,5,7\}$. Let $T=2x^2+14r^2$. Writing $\kappa_2=\gcd(x,2),$ $\kappa_5=\gcd(x,5)$ and $\kappa_7=\gcd(x,7)$, there exist coprime integers $a$ and $b$ such that 
		
		\begin{equation}\label{eq:powers}
			x=\kappa_2^{p-2}\cdot\kappa_5^{p-1}\cdot\kappa_7^{p-2}\cdot a^p \ \text{ and } \ x^4+40r^2x^2+140r^4=\kappa_2^2\cdot\kappa_5\cdot \frac{7^{p-1}}{\kappa_7^{p-2}} \cdot b^p,
		\end{equation}
		where $\gcd(10,b)=1$. Replacing equation (\ref{eq:powers}) in (\ref{eq:dif}) we get the following ternary Diophantine equation of signature $(p,p,2)$:
		\begin{equation}\label{eq:ternarypp2}
			\kappa_2^{4p-10}\cdot\kappa_5^{4p-5}\cdot\kappa_7^{4p-10}\cdot13\cdot a^{4p}+\left(\frac{7b}{\kappa_7}\right)^p=\frac{5}{\kappa_5}\left(\frac{T}{\kappa_2\cdot\kappa_7}\right)^2.
		\end{equation}
		Following the recipes of  \cite{BennetSkinner}, we attach to (\ref{eq:ternarypp2}) a Frey curve $E_{\kappa_2,\kappa_5,\kappa_7}$ for each value of $(\kappa_2,\kappa_5,\kappa_7)$. These are: 
		\[E_{1,\kappa_5,\kappa_7}: Y^2=X^3+\frac{10T}{\kappa_5\kappa_7}X^2+\frac{5}{\kappa_5}\left(\frac{7b}{\kappa_7}\right)^pX, \]
		of conductor  $2^5\cdot\frac{5^2}{\kappa_5}\cdot7\cdot13\cdot\rad_{910}(ab)$, and
		
		
		\[E_{2,\kappa_5,\kappa_7}: Y^2+XY=X^3+\left( \frac{\pm10T}{\kappa_5\kappa_7}-4\right)X^2+2^{4p-14}\cdot5\cdot\kappa_5^{4p-6}\cdot\kappa_7^{4p-10}\cdot13\cdot a^{4p}\cdot X,\]
		of conductor $2\cdot\frac{5^2}{\kappa_5}\cdot7\cdot13\cdot\rad_{910}(ab)$.
		
		\begin{remark}\label{rem:levellowering}
			It is important to note that when $\kappa_7=1$ the elliptic curve $E_{\kappa_2,\kappa_5,1}$ has in its discriminant the prime 7 raised to a $p$-th power, while this is no longer true for  $E_{\kappa_2,\kappa_5,7}$  (see \cite[Lemma 2.1]{BennetSkinner}). This plays an important role when applying Ribet's level-lowering theorem. \TODO{Creo que para que tenga sentido decir esto, deberia escribir antes los discriminante de las curvas.}
		\end{remark}
		
		\mycomment{ Then, we divide the analysis in four cases. Writing $\kappa=\gcd(x,5)$, there exist coprime integers $a$ and $b$ such that:

			\mycomment{
				\begin{table}[h]
					\begin{tabular}{|l|c|c|c|c|}
						\hline
						\textbf{Case} & \textbf{Ternary equation} & \textbf{$k$} & \textbf{Frey curve} & \textbf{Conductor} \\ 
						\hline  \hline 
						
						1 & $	13\cdot k^{4n-5}\cdot a^{4n}+(7b)^n=\frac{20}{k}T^2$ & 1 & $Y^2=x^3+20TX^2+5\cdot(7b)^nX$ & $2^5\cdot5^2\cdot13$ \\
						\hline
						1 & $	13\cdot k^{4n-5}\cdot a^{4n}+(7b)^n=\frac{20}{k}T^2$ & $5$ & $Y^2=x^3+4TX^2+(7b)^nX$ & $2^5\cdot5\cdot13$\\
						\hline 
						
						2 & $13\cdot7^{4n-10}\cdot k^{4n-5}\cdot a^{4n}+b^n=\frac{20}{k}\left(\frac{T}{7}\right)^2$ & 1 & $E_1:Y^2=X^3+\frac{20T}{7}X^2+5b^nX $ & $2^5\cdot5^2\cdot7\cdot13$ \\
						\hline
						2 & $13\cdot7^{4n-10}\cdot k^{4n-5}\cdot a^{4n}+b^n=\frac{20}{k}\left(\frac{T}{7}\right)^2$ & $5$ & $Y^2=X^3+\frac{4T}{7}X^2+b^nX$  & $2^5\cdot5\cdot7\cdot 13$\\
						\hline 
						
						3 & $13\cdot2^{4s-2} \cdot k^{4n-5}\cdot a^{4n}+(7b)^n=\frac{5}{k}T^2$ & 1 & $ Y^2+XY=X^3-\frac{5T+1}{4}X^2+\frac{13\cdot2^{4s-2}\cdot5\cdot a^{4n}}{64}X$ &$2\cdot5^2\cdot13$ \\
						\hline
						3 & $13\cdot2^{4s-2} \cdot k^{4n-5}\cdot a^{4n}+(7b)^n=\frac{5}{k}T^2$ & $5$ & $Y^2+XY=X^3-\frac{T+1}{4}X^2+\frac{13\cdot2^{4s-2}\cdot 5^{4n-5}\cdot a^{4n}}{64}X $& $2\cdot5\cdot13$\\
						\hline 
						
						4 & $13\cdot7^{4n-10}\cdot2^{4s-2}\cdot k^{4n-5}\cdot a^{4n}+b^n=\frac{5}{k}\left(\frac{T}{7}\right)^2$ & 1 & $ Y^2+XY=X^3+\left(\frac{\frac{5T}{7}-1}{4}\right)X^2+\frac{13\cdot7^{4n-10}\cdot2^{42-2}\cdot a^{4n}}{64}X$ &$2^5\cdot5^2\cdot7\cdot13$ \\
						\hline
						4 & $	13\cdot k^{4n-5}\cdot a^{4n}+(7b)^n=\frac{20}{k}T^2$ & $5$ & $ Y^2+XY=X^3+\left(\frac{\frac{T}{7}-1}{4}\right)X^2+\frac{13\cdot7^{4n-10}\cdot 2^{4s-2}\cdot a^{4n}}{64}X$ & $2\cdot5\cdot7\cdot13$\\
						\hline
					\end{tabular}
					
				\end{table}
			}
			\renewcommand{\arraystretch}{1.5}

			\subsection*{Case 1: $2\nmid x$ and $7\nmid x$}

			\begin{equation}\label{case1}
				x=\kappa^{p-1}a^p \ \text{ and } \ x^4+40r^2x^2+140r^4=7^{p-1}\kappa b^p.
			\end{equation}
			
			\subsection*{Case 2: $2\nmid x$ and $7\mid x$}
			
			\begin{equation}\label{case2}
				x=7^{p-2}\kappa^{p-1}a^p \ \text{ and } \ x^4+40r^2x^2+140r^4=7\kappa b^p.
			\end{equation}

			\subsection*{Case 3: $2\mid x$ and $7\nmid x$}
			
			\begin{equation}\label{case3}
				x=2^{p-2}\kappa^{p-1}a^p \ \text{ and } \ x^4+40r^2x^2+140r^4=7^{p-1}2^2\kappa b^p.
			\end{equation}

			\subsection*{Case 4: $2\mid x$ and $7\mid x$}
			
			\begin{equation}\label{case4}
				x=2^{p-2}7^{p-2}\kappa^{p-1}a^p \ \text{ and } \ x^4+40r^2x^2+140r^4=7\cdot2^2\kappa b^p.
			\end{equation}

			\mycomment{
				Let $s=nv_2(y)-2$.
				
				\subsection*{Case 3: $2\mid x$ and $7\nmid x$}
				
				\begin{equation}\label{case3}
					x=2^s\kappa^{n-1}a^n \ \text{ and } \ x^4+40r^2x^2+140r^4=7^{n-1}2^2\kappa b^n.
				\end{equation}

				\subsection*{Case 4: $2\mid x$ and $7\mid x$}
				
				\begin{equation}\label{case4}
					x=7^{n-2}2^s\kappa^{n-1}a^n \ \text{ and } \ x^4+40r^2x^2+140r^4=7\cdot2^2\kappa b^n.
				\end{equation}
				
			}

			\mycomment{
				\subsection*{Case 1: $2\nmid x$ and $7\nmid x$}

				\begin{equation}\label{case1}
					x=k^{n-1}a^n \ \text{ and } \ x^4+40r^2x^2+140r^4=7^{n-1}kb^n.
				\end{equation}
				
				Replacing equation (\ref{case1}) in (\ref{eq:dif}) we have:
				
				\begin{equation*}
					13\cdot k^{4n-5}\cdot a^{4n}+(7b)^n=\frac{20}{k}T^2,
				\end{equation*}
				
				\begin{itemize}
					\item If $k=1$,
					\[E_1: Y^2=x^3+20TX^2+5\cdot(7b)^nX, \quad N_1=2^5\cdot5^2\cdot13.\]
					\item If $k=5$,
					\[E_5:Y^2=x^3+4TX^2+(7b)^nX,\quad N_5=2^5\cdot5\cdot13.\]
				\end{itemize}
				
				We can discard both cases for $n>3$.
				
				\subsection*{Case 2: $2\nmid x$ and $7\mid x$}
				
				\begin{equation}\label{case2}
					x=7^{n-2}k^{n-1}a^n \ \text{ and } \ x^4+40r^2x^2+140r^4=7kb^n.
				\end{equation}
				
				Replacing equation (\ref{case2}) in (\ref{eq:dif}) we have:
				
				\begin{equation*}
					13\cdot7^{4n-10}\cdot k^{4n-5}\cdot a^{4n}+b^n=\frac{20}{k}\left(\frac{T}{7}\right)^2
				\end{equation*}
				
				\begin{itemize}
					\item If $k=1$, 
					\[E_1:Y^2=X^3+\frac{20T}{7}X^2+5b^nX,\quad N_1=2^5\cdot5^2\cdot7\cdot13.\]
					\item If $k=5$,
					\[E_5:Y^2=X^3+\frac{4T}{7}X^2+b^nX,\quad N_5=2^5\cdot5\cdot7\cdot 13.\]
				\end{itemize}
				
				If $k=1$, we can discard for $n>19$. If $k=5$, we cannot discard five newforms, corresponding to rational elliptic curves with LMFDB label 14560.k1, 14560.q1, 14560.s1, 14560.j1, 14560.r1. 
				
				Following the symplectic argument, we can discard each of this elliptic curves for the $50\%$ of the prime exponents ($n\equiv11,13,17,19\pmod{20}$).
				
				\subsection*{Case 3: $2\mid x$ and $7\nmid x$}
				
				\begin{equation}\label{case3}
					x=2^sk^{n-1}a^n \ \text{ and } \ x^4+40r^2x^2+140r^4=7^{n-1}2^2kb^n.
				\end{equation}

				Replacing equation (\ref{case3}) in (\ref{eq:dif}) we have:
				
				\begin{equation*}
					13\cdot2^{4s-2} \cdot k^{4n-5}\cdot a^{4n}+(7b)^n=\frac{5}{k}T^2,
				\end{equation*}
				
				\begin{itemize}
					\item If $k=1$,
					\[E_1: Y^2+XY=X^3-\frac{5T+1}{4}X^2+13\cdot2^{4s-8}\cdot5\cdot a^{4n}\cdot X,\quad N_1=2\cdot5^2\cdot13.\]
					
					\item If $k=5$,
					\[E_5:Y^2+XY=X^3-\frac{T+1}{4}X^2+ 13\cdot2^{4s-8}\cdot 5^{4n-5}\cdot a^{4n}\cdot X,\quad N_5=2\cdot5\cdot13.\]
				\end{itemize}
				
				We can discard both cases for $n>3$.

				\subsection*{Case 4: $2\mid x$ and $7\mid x$}
				
				\begin{equation}\label{case4}
					x=7^{n-2}2^sk^{n-1}a^n \ \text{ and } \ x^4+40r^2x^2+140r^4=7\cdot2^2kb^n.
				\end{equation}
				
				Replacing equation (\ref{case4}) in (\ref{eq:dif}) we have:
				
				\begin{equation}
					13\cdot7^{4n-10}\cdot2^{4s-2}\cdot k^{4n-5}\cdot a^{4n}+b^n=\frac{5}{k}\left(\frac{T}{7}\right)^2
				\end{equation}
				
				\begin{itemize}
					\item If $k=1$,
					\[E_1: Y^2+XY=X^3+\left(\frac{\frac{5T}{7}-1}{4}\right)X^2+13\cdot7^{4n-10}\cdot2^{4s-8}\cdot a^{4n}\cdot 5\cdot X,\quad N_1=2\cdot5^2\cdot7\cdot13.\]
					\item If $k=5$,
					\[E_5: Y^2+XY=X^3+\left(\frac{\frac{T}{7}-1}{4}\right)X^2+13\cdot7^{4n-10}\cdot 2^{4s-8}\cdot a^{4n}\cdot5^{4n-5}\cdot X,\quad N_5=2\cdot5\cdot7\cdot13.\]
				\end{itemize}
				
				We can discard the case $k=5$ for $n>11$.  If $k=1$, we can discard for $n>19$
			}
			\mycomment{
				\newpage
				\subsection*{Case 2: $2\nmid x$ and $7\mid x$}
				
				\begin{equation}
					x=7^{n-2}k^{n-1}a^n \ \text{ and } \ x^4+40d^2x^2+140d^4=7kb^n.
				\end{equation}
				Let $s=nv_2(y)-2$.
				\subsection*{Case 3: $2\mid x$ and $7\nmid x$}
				
				\begin{equation}
					x=2^sk^{n-1}a^n \ \text{ and } \ x^4+40d^2x^2+140d^4=7^{n-1}2^2kb^n.
				\end{equation}
				\subsection*{Case 4: $2\mid x$ and $7\mid x$}
				
				\begin{equation}\label{case4}
					x=7^{n-2}2^sk^{n-1}a^n \ \text{ and } \ x^4+40d^2x^2+140d^4=7\cdot2^2kb^n.
				\end{equation}
				
				\subsection{Signature ($n$,$n$,2)}
				Let $T=x^2+7r^2$. Replacing equations (\ref{case1})-(\ref{case4}) in (\ref{eq:dif}) we have:
				\subsection*{Case 1}
				\begin{equation}
					13\cdot k^{4n-5}\cdot a^{4n}+(7b)^n=\frac{20}{k}T^2,
				\end{equation}
				
				If $k=1$,
				\[E_1: Y^2=x^3+20TX^2+5\cdot(7b)^nX, \quad N_1=2^5\cdot5^2\cdot13.\]
				
				If $k=5$,
				\[E_5:y^2=x^3+4TX^2+(7b)^nX,\quad N_5=2^5\cdot5\cdot13.\]
				\TODO{Newform with parameter 6 cannot be discarded.}
				\subsection*{Case 2} 
				
				If $k=1$, 
				\[E_1:Y^2=X^3+\frac{20T}{7}X^2+5b^nX,\quad N_1=2^5\cdot5^2\cdot7\cdot13.\]
				
				If $k=5$,
				\[E_5:Y^2=X^3+\frac{4T}{7}X^2+b^nX,\quad N_5=2^5\cdot5\cdot7\cdot 13.\]
				\subsection*{Case 3}
				\begin{equation}
					13\cdot2^{4s-2} \cdot k^{4n-5}\cdot a^{4n}+(7b)^n=\frac{5}{k}T^2,
				\end{equation}
				
				If $k=1$,
				\[E_1: Y^2+XY=X^3-\frac{5T+1}{4}X^2+\frac{13\cdot2^{4s-2}\cdot5\cdot a^{4n}}{64}X,\quad N_1=2\cdot5^2\cdot13.\]
				
				If $k=5$,
				\[E_5:Y^2+XY=X^3-\frac{T+1}{4}X^2+\frac{13\cdot2^{4s-2}\cdot 5^{4n-5}\cdot a^{4n}}{64}X,\quad N_5=2\cdot5\cdot13.\]
				
				\subsection*{Case 4} 
				
				\begin{equation}
					13\cdot7^{4n-10}\cdot2^{4s-2}\cdot k^{4n-5}\cdot a^{4n}+b^n=\frac{5}{k}\left(\frac{T}{7}\right)^2
				\end{equation}
				
				If $k=1$,
				\[E_1: Y^2+XY=X^3+\left(\frac{\frac{5T}{7}-1}{4}\right)X^2+\frac{13\cdot7^{4n-10}\cdot2^{42-2}\cdot a^{4n}}{64}X\quad N_1=2^5\cdot5^2\cdot7\cdot13.\]
				
				If $k=5$,
				\[E_5: Y^2+XY=X^3+\left(\frac{\frac{T}{7}-1}{4}\right)X^2+\frac{13\cdot7^{4n-10}\cdot 2^{4s-2}\cdot a^{4n}}{64}X,\quad N_5=2\cdot5\cdot7\cdot13\]

			}
			Replacing equations (\ref{case1})-(\ref{case4}) in (\ref{eq:dif}) we get, in each case, a ternary Diophantine equation of signature $(p,p,2)$.  Following the recipes of  \cite{BennetSkinner}, we attach a Frey curve to this equation (see Table \ref{table:freycurves}). The notation for Table \ref{table:freycurves} is the following: If $n$ and $m$ are integers, then $\rad_n(m)$ is the product of all the primes that divide $m$ and do not dived $n$. We also define $\delta$ to be equal to 2 if $k=1$ and equal to 1 if $k=5$. We write $T=x^2+7r^2$.
			\begin{table}[H]
				\resizebox{\textwidth}{!}{%
					
					\begin{tabular}{|l|c|c|c|}
						\hline
						\textbf{Case} & \textbf{Ternary equation} &  \textbf{Frey curve} & \textbf{Conductor} \\ 
						\hline  \hline 
						
						1 & $	13\cdot k^{4n-5}\cdot a^{4n}+(7b)^n=\frac{20}{k}T^2$ &   $Y^2=X^3+\frac{20}{k}\cdot TX^2+\frac{5}{k}(7b)^nX$ & $2^5\cdot5^\delta\cdot13\cdot\rad_{130}(ab)$ \\
						\hline

						2 & $13\cdot7^{4n-10}\cdot k^{4n-5}\cdot a^{4n}+b^n=\frac{20}{k}\left(\frac{T}{7}\right)^2$ & $Y^2=X^3+\frac{20T}{7k}X^2+\frac{5}{k}b^nX $ & $2^5\cdot5^\delta\cdot7\cdot13\cdot\rad_{910}(ab)$ \\
						\hline

						3 & $13\cdot2^{4s-2} \cdot k^{4n-5}\cdot a^{4n}+(7b)^n=\frac{5}{k}T^2$ &  $ Y^2+XY=X^3-\frac{5T+k}{4k}X^2+13\cdot2^{4s-8}\cdot5\cdot k^{4n-6}\cdot a^{4n}X$ &$2\cdot5^\delta\cdot13\cdot\rad_{130}(ab)$ \\
						\hline

						4 & $13\cdot7^{4n-10}\cdot2^{4s-2}\cdot k^{4n-5}\cdot a^{4n}+b^n=\frac{5}{k}\left(\frac{T}{7}\right)^2$ &  $ Y^2+XY=X^3+\frac{5T-7k}{28k}X^2+13\cdot7^{4n-10}\cdot2^{4s-8}\cdot5\cdot k^{4n-6}\cdot a^{4n}X$ &$2\cdot5^\delta\cdot7\cdot13\cdot\rad_{910}(ab)$ \\
						\hline
						
				\end{tabular}}
				\caption{\label{table:freycurves}}
			\end{table}
		}

		\subsection{Second Frey curves}\label{subsec:SFrey} In this subsection we will se how to attach new Frey curves to a solution $(x,y)$ of (\ref{eq:original}) with $\gcd(x,r)=1$. These curves will be used only in the proof of Theorem \ref{thm:supported}, so the reader can skip this part in a first instance. 
		
		From the identity
		\[(x^4+40r^2x^2+140r^4)+260r^4=(x^2+20r^2)^2\]
		we have
		\begin{equation}\label{eq:case2 24n}
			\frac{7^{p-1}}{\kappa_7^{p-2}}b^p+\frac{260r^4}{\kappa_2^2\cdot\kappa_5}=\kappa_5\left(\frac{x^2+20r^2}{\kappa_2\cdot\kappa_5}\right)^2.
		\end{equation}
		The three terms in this equation are integral and coprime. By fixing the set of primes dividing $r$, we may interpret
		this as a generalized Fermat equation of signature $(p,p,2)$ by treating the term $\frac{260r^4}{\kappa_2^2\cdot\kappa_5}$ as $\frac{260r^4}{\kappa_2^2\cdot\kappa_5}\cdot 1^p$. Once again, following \cite{BennetSkinner} we can associate a Frey curve $F_{\kappa_2,\kappa_5,\kappa_7}$ for each value of $(\kappa_2,\kappa_5,\kappa_7)$. For simplicity, we will restrict to case $\kappa_2=1$. If $2\nmid r$ we have
		\begin{equation*}\label{eq:ellcurve1}
			F_{1,\kappa_5,\kappa_7}:	Y^2=X^3-\left(x^2+20r^2\right)X^2+65r^4X,
		\end{equation*}
		of conductor $2^3\cdot\kappa_5\cdot5\cdot7\cdot13\cdot\rad_{910}(br)$. If $2\mid r$, we have
		\begin{equation*}\label{eq:ellcurve2}
			F_{1,\kappa_5,\kappa_7}: Y^2+XY=X^3+\frac{x^2+20r^2-1}{4}X^2+65\left(\frac{r}{2}\right)^4X,
		\end{equation*}
		of conductor $2^\delta\cdot\kappa_5\cdot5\cdot7\cdot13\cdot\rad_{910}(br)$, where $\delta=0$ if $4\nmid r$ and $\delta=1$ otherwise.

		\section{Proof the  main theorems}
		\label{sec:main}
		
		\begin{thm}\label{thm:main}
			Let $p>19$ be a prime. Then, the only non-trivial integer solutions of
			\[	(x-3r)^5+(x-2r)^5+(x-r)^5+x^5+(x+r)^5+(x+2r)^5+(x+3r)^5=y^p\]
			with $\gcd(x,r)=1$ satisfy $2\nmid x$ and $35\mid x$. Moreover, if $\kro{-5}{p}=-1$ and in addition,  $\kro{2}{p}=1$ or $\kro{3}{p}=1$, then there are no non-trivial solutions with $\gcd(x,r)=1$.
			
		\end{thm}
		\begin{proof}
			Let $(x,y)$ be a solution of equation (\ref{eq:original}) such that $\gcd(x,r)=1$.
			From the analysis of Subsection \ref{subsec:FFreycurves}, we can attach to $(x,y)$ a rational Frey elliptic curve $E=E_{{\kappa_2,\kappa_5,\kappa_7}}$ of conductor $N_E$. 
			From modularity of $E$ \cite{Wiles,TaylorWiles,BreuilConradDiamondTaylor}, irreducibility of $\bar{\rho}_{E,p}$ \cite[Corollary 3.1]{BennetSkinner} and Ribet's level-lowering theorem \cite[Theorem 1.1]{RibetLowering} (and Remark \ref{rem:levellowering}), there is a weight 2 newform $f$ without character on $\Gamma_0(N_f)$ such that $\bar{\rho}_{E,p}\simeq\bar{\rho}_{f,\id{p}}$, where $\id{p}$ is a prime in the coefficient field $K_f$ diving $p$ and $N_f=	2^\delta\cdot\frac{5^2}{\kappa_5}\cdot\frac{7}{\kappa_7}\cdot13$, with
			\begin{equation*}
				\delta=\begin{cases}
					5 & \text{if } \kappa_2=1,\\
					1 & \text{if } \kappa_2=2.
				\end{cases}
			\end{equation*}
			\mycomment{
				
				\begin{equation*}
					N_f=\begin{cases}
						2^5\cdot5^\delta\cdot13, & \text{in case 1} \\
						2^5\cdot5^\delta\cdot7\cdot13, & \text{in case 2,}\\
						2\cdot5^\delta\cdot13, & \text{in case 3,}	\\
						2\cdot5^\delta\cdot7\cdot13			& \text{in case 4,}	
					\end{cases}
					\quad\text{ where } \delta=\begin{cases}
						2, & \text{if } \kappa=1,\\
						1, & \text{if } \kappa=5.
					\end{cases}
			\end{equation*}}
			Since $E$ has a non-trivial rational $2$-torsion point then $a_\ell(E)\equiv0\pmod2$ for each prime $\ell$ of good reduction, by \cite[Theorem 2]{Katz}. On the other hand, if $\kappa_7=7$ we have that $7\nmid N_f$ and $7\mid\mid N_E$, so that ${a_7(f)\equiv\pm8\pmod{\id{p}}}$. Using this fact and \verb|Magma|, we run Proposition \ref{prop:mazur} for each of the newforms in $S_2(N_f)$ and we are able to discard all of them if $p>19$, except for five newforms of level $N_f=2^5\cdot5\cdot7\cdot13$. This proves the first point of the statement. 
			
			To discard the remaining five newforms, we will apply the symplectic argument. The five have
			rational coefficients hence correspond to isogeny classes of elliptic curves, namely those with LMFDB label \textcolor{MidnightBlue}{\href{https://www.lmfdb.org/EllipticCurve/Q/14560/k/}{{\ttfamily 14560.k}}}, \textcolor{MidnightBlue}{\href{https://www.lmfdb.org/EllipticCurve/Q/14560/k/}{{\ttfamily14560.q}}}, \textcolor{MidnightBlue}{\href{https://www.lmfdb.org/EllipticCurve/Q/14560/s/}{{\ttfamily 14560.s}}}, \textcolor{MidnightBlue}{\href{https://www.lmfdb.org/EllipticCurve/Q/14560/j/}{{\ttfamily 14560.j}}}, \textcolor{MidnightBlue}{\href{https://www.lmfdb.org/EllipticCurve/Q/14560/r/}{{\ttfamily 14560.r}}} \cite{lmfdb}. Let us follow the notation of Proposition \ref{prop:symplecticmult}. We chose one elliptic curve for each isogeny class (the computation is independent of the choice). Hence we have:

			\begin{enumerate}[(i)]
				\item $E'=14560.k1$, \quad $\Delta'=2^6\cdot5\cdot7^2\cdot13$, or
				\item $E'=14560.q1$, \quad $\Delta'=2^9\cdot5^2\cdot7^3\cdot13^2$, or
				\item $E'=14560.s1$, \quad $\Delta'=2^6\cdot5\cdot7^4\cdot13$, or
				\item $E'=14560.r1$, \quad $\Delta'=2^6\cdot5^3\cdot7^4\cdot13^3$, or
				\item $E'=14560.j3$, \quad $\Delta'=2^6\cdot5^2\cdot7^6\cdot13^4$. 
			\end{enumerate} 
			By \cite[Lemma 2.1]{BennetSkinner}, the Frey curve $E$ has minimal discriminant  $\Delta=2^6\cdot5^{4p-5}\cdot7^{4p-10}\cdot13\cdot(a^4b^2)^p$ (where $a,b$ are as in Subsection \ref{subsec:FFreycurves}).
			
			Suppose that $\kro{-5}{p}=-1$. Then, applying Proposition \ref{prop:symplecticmult} for  primes $\ell=5,13$ we reach a contradiction in 
			cases (i)--(iv). However, in (v) we conclude that $E$ and $E'$ are symplectically isomorphic (when $\ell=13$) and that they are anti-symplectically isomorphic if and only if $\kro{-5/2}{p}=-1$ (when $\ell=5$). Since we are under the assumption  $\kro{-5}{p}=-1$, the latter is equivalent to $\kro{2}{p}=1$. Similarly, considering $\ell=7$, we can prove that $E$ and $E'$ are anti-symplectically isomorphic if and only if $\kro{3}{p}=1$, concluding the proof.
		\end{proof}
		\begin{remark}\label{rem:7|r}
			In particular, if $p>19$ and $7\mid r$ or $5\mid r$ then there are no non-trivial solutions of (\ref{eq:original}) with $\gcd(x,r)=1$ 
		\end{remark}
		
		\begin{thm}\label{thm:supported}
			Let $p>19$ be a prime and $r=2^{a_1}5^{a_2}7^{a_3}13^{a_4}$ with integers $a_i\ge0$. Then, there are no non-trivial solutions of equation
			\[	(x-3r)^5+(x-2r)^5+(x-r)^5+x^5+(x+r)^5+(x+2r)^5+(x+3r)^5=y^p\]
			with $\gcd(x,r)=1$.
		\end{thm}
		\begin{proof}
			Let $(x,y)$ be a solution of equation (\ref{eq:original}) such that $\gcd(x,r)=1$, with $r$ as in the statement. For the sake of contradiction assume that $(x,y)$ is non-trivial. Then if $p>19$,  we get from Theorem~\ref{thm:main} that $(\kappa_2,\kappa_5,\kappa_7)=(1,5,7)$, so we can attach to $(x,y)$ a Frey curve $F=F_{1,5,7}$ as in Subsection \ref{subsec:SFrey}.  Again, by modularity of rational elliptic curves and Ribet's level-lowering theorem we deduce that there is a weight 2 newform $f$ without character on $\Gamma_0(N_f)$ such that $\bar{\rho}_{F,p}\simeq\bar{\rho}_{f,\id{p}}$, where $\id{p}$ is a prime in the coefficient field $K_f$ diving $p$ and $N_f=2^\delta\cdot5^2\cdot7\cdot13$, with
			\begin{equation*}
				\delta=\begin{cases}
					3 & \text{if } v_2(r)=0,\\
					0 & \text{if }  v_2(r)=1,\\
					1 & \text{if }  v_2(r)\ge2.\\
					
				\end{cases}	
			\end{equation*}
			Once again, using that $F$ has a non-trivial rational $2$-torsion point we apply Proposition \ref{prop:mazur}. We can discard all the possible newforms if $p>11$, giving the desired contradiction.
		\end{proof}
		
	}

	\bibliographystyle{alpha}
	\bibliography{biblio}
	
\end{document}